\documentclass[11pt,letterpaper]{amsart}

\usepackage{amsmath,amsthm,amssymb,amsfonts}
\usepackage{mathrsfs}     
\usepackage{hyperref}     
\usepackage{graphicx}     
\usepackage{enumerate}    
\RequirePackage{rotating}

\usepackage{xcolor}
\usepackage{algorithm}
\usepackage{graphicx}
\usepackage{mathtools}
\usepackage{mathrsfs}
\usepackage{fix-cm}
\usepackage{multirow}
\usepackage{amsmath}
\usepackage{amssymb}
\usepackage{latexsym}
\usepackage{dsfont}
\usepackage{xcolor}
\usepackage{cite}
\usepackage{dsfont}
\usepackage{enumitem}
\usepackage{todonotes}
\usepackage{hyperref}
\usepackage{booktabs}
\usepackage{bbm}
\usepackage{framed}
\usepackage{mdframed}
\usepackage{pgfplots}
\pgfplotsset{compat = newest}
\usepackage{caption}
\usepackage{subcaption}
\usepackage{color}
\usepackage{graphicx,epsfig}
\usepackage{verbatim}
 
\usepackage{hyperref}

\usepackage{tabularray}
\UseTblrLibrary{siunitx}

\hypersetup{
colorlinks=true,
linkcolor=blue,
citecolor=darkgreen,
urlcolor=blue}
\definecolor{darkgreen}{RGB}{0,150,0}
\definecolor{C0}{RGB}{31,119,180}
\definecolor{C1}{RGB}{255, 127, 14}
\definecolor{MineShaft}{rgb}{0.188,0.188,0.188}
\definecolor{red}{RGB}{0,0,0}
\usepackage{algpseudocode}

\numberwithin{equation}{section}
\newcommand{\inner}[2]{\langle #1, #2 \rangle}

\newcommand{\R}{\mathbb{R}}
\newcommand{\N}{\mathbb{N}}

\newcommand{\inte}[1]{{\rm int (#1)}\kern 0.12em}
\newcommand{\cl}[1]{\overline{#1} }

\newcommand{\orbit}[1]{{\rm orb(#1)}\kern 0.12em}

\let\epsilon\varepsilon
\let\subseteq\subset 

\theoremstyle{plain}
\newtheorem{theorem}{Theorem}[section]
\newtheorem{lemma}[theorem]{Lemma}
\newtheorem{proposition}[theorem]{Proposition}
\newtheorem{corollary}[theorem]{Corollary}

\theoremstyle{definition}
\newtheorem{definition}[theorem]{Definition}

\theoremstyle{remark}
\newtheorem{remark}[theorem]{Remark}

\numberwithin{equation}{section}
\newcounter{claim}[theorem]

\title[On the topological dynamical system of tilings]{On the topological dynamical system of tilings}

\author{José Pablo Santander}
\address{Center for Mathematical Modeling (CNRS IRL2807) , Universidad de Chile, Santiago, Chile}
\email{josesantander@ug.chile.cl}

\subjclass[2020]{ Primary 37B52; Secondary  54D30, 52C23}
\keywords{Tiling space, finite local complexity, repetitive tiling, topological dynamical system}
\thanks{ 
            This work was partially supported by Centro de
			Modelamiento Matem\'{a}tico (CMM), ACE210010 and FB210005, BASAL funds for
			center of excellence and ANID-Chile grant: Fondecyt Regular 1240335
}
\begin{document}

\begin{abstract}

The compactness of the tiling space and the continuity of the action of the Euclidean space by translation, as well as the minimality of this action for repetitive tilings, are among the most frequently mentioned foundational facts in the dynamical theory of aperiodic order. Nonetheless, despite their classical status, a comprehensive treatise with complete and detailed proofs, to the best of our knowledge, is not available in the literature: existing arguments are typically sketched, deferred to other sources, or rely on unstated results or hypotheses. 
The purpose of this manuscript is to give a self-contained and pedagogical treatment of these foundations, providing complete and carefully explained proofs. We hope that this exposition will provide a useful reference and contribute to a clearer and more accessible understanding of the fundamentals of the theory of tiling dynamical systems.

\end{abstract}

\maketitle


\section{Introduction}

In a few words, a tiling is an arrangement of “pieces” or “tiles” (which are copies of some general pieces called prototiles) that cover Euclidean space without overlapping.

One could say that the modern mathematical theory of tilings begins in the 1960s with the work of \textsc{Hao Wang} \cite{Wang1961}, from the perspective of computability theory and logic.

In \cite{Berger1966}, \textsc{Robert Berger} proved that the problem of determining whether a given set of tiles can tile the plane is undecidable. A central ingredient in the proof is the construction of a set of tiles that can tile the plane only in an \emph{aperiodic} way. Such sets of tiles are called \emph{aperiodic}. 
A tiling is said to be \emph{aperiodic} if no nontrivial translation of it coincides with the original tiling.

On the other hand, \textbf{Crystallography} is the study of the atomic structure of crystals through the commonly used technique of X-ray diffraction. For a long time, only materials whose diffraction patterns produced “ordered” structures exhibiting translational symmetries throughout space were known. It was not until 1982 that the materials scientist \textsc{Dan Shechtman} observed certain aluminum--manganese alloys whose diffraction patterns exhibited rotational symmetries of order 5, which cannot occur in patterns with translational symmetries (by the crystallographic restriction theorem). The discovery of these materials, whose structural form is ordered but non-periodic (as they lack translational symmetry) earned him the Nobel Prize in Chemistry in 2011.

To some extent, the physical discovery of these materials,  known as quasicrystals, had been anticipated by the appearance in the literature of certain aperiodic tilings. The British physicist Sir \textsc{Roger Penrose} published examples of tilings exhibiting diffraction patterns while lacking this type of translational symmetry.

After the discovery of quasicrystals, aperiodic tilings began to be studied from the perspective of ergodic theory, especially in connection with their use as models for physical systems. This marked the beginning of a more dynamical approach.
The analogy between tilings and dynamical systems on $\mathbb{Z}^d$ was observed in \cite{Rudolph1989}, and the application of ergodic theory to tiling theory was studied systematically in \cite{Radin1996}.

One of the fundamental steps to obtain a dynamical system with good properties is to endow the space with a topology with respect to which is compact.  The compactness of the tiling space with its usual metric is a known fact. Nevertheless, as it is pointed out in \cite{Robinson2004}, the details are somewhat subtle. In fact, it is surprisingly difficult to find in the literature a complete account of these arguments; most standard references state that compactness follows from a diagonal argument (see \cite{Sadun2006,Solomyak1997}) without providing further details, and in some cases, completeness is proved, or even assumed beforehand \cite{Robinson2004, Sadun2008_topology_of_tilings}. Furthermore, in \cite{Trevino2023, trevino2025quantitative, liu2026new} the hull of a tiling is considered as the metric completion of the orbit of a tiling under the metric \eqref{def_metrica}. The same occurs with minimality of tiling spaces, where the reference that is usually cited is \cite{gottschalk1944orbit} (see \cite{bufetov2013limit}). Moreover, the preliminary material on which these results depend (Section \ref{sec:tilings} and Section \ref{sec:hausdorff}), to the best of our knowledge, is not treated systematically in the references we are aware of.
 The purpose of this manuscript is to fill this expository gap by giving  self-contained proofs and by making explicit the underlying subtleties of these classical results.

\section{Basic notation}
Throughout this manuscript,  $|| \cdot ||$ will denote the a norm on $\R^d$. For $r>0$, $B_r$ stands for the closed ball of radius $r$ centered at the origin. For $x\in\R^d$ and $\delta>0$, $B(x,\delta)$ denotes the open ball of center $x$ and radius $\delta$. $\inte{A}$ and $\overline{A}$ will denote the interior and the closure of a set $A$, respectively.  For $Y\subseteq\R^d$ and $z\in\R^d$, $d(z,Y)$ stands for the distance of the point $z$ to the set $Y$, that is $$d(z,Y):=\inf\{||z-y||:y\in Y\}.$$

For a Borel set $A\subseteq \R^d$, $\lambda(A)$ will denote its Lebesgue meausure.
\section{Tiles and Tilings}\label{sec:tilings}


A set $D \subseteq \mathbb{R}^{d}$ is called a \emph{tile} if it is compact and equal to the closure of its interior. We will always assume that tiles are connected, although in some situations it might be useful to allow disconnected tiles. Tiles in $\mathbb{R}$ are bounded closed intervals, tiles in $\mathbb{R}^2$ are often polygons, but fractal tiles also frequently appear in examples.

A tiling $x$ is a collection of tiles such that the interiors of the tiles are pairwise disjoint, and whose union covers $\mathbb{R}^d$. Note that a tiling has countable many tiles, because the interior of the tiles are non empty and pairwise, so for every tile of the tiling we can choose a point in its interior with rational coordinates.
Two tiles $D_{1}$ and $D_{2}$ are said to be equivalent, denoted by $D_1 \sim D_{2}$, if one is a translation of the other. The equivalence classes of this relation are called \emph{prototiles}. Two tiles are said to be \emph{congruent} if they are related by an orientation-preserving isometry (a rotation composed with a translation). We call the equivalence class of congruent tiles \emph{congruence prototiles}.

\begin{definition}
Let $\mathcal{T}$ be a finite collection of distinct prototiles (resp. congruence prototiles) in $\mathbb{R}^d$. We denote by $X_\mathcal{T}$ the set of all possible tilings of $\mathbb{R}^d$ whose tiles belong to some prototile in the collection $\mathcal{T}$. We refer to $X_\mathcal{T}$ as a \emph{full tiling space}.
\end{definition}

Note that given a tiling $x$ of the space $\mathbb{R}^d$, the translation of $x$ by an element $v \in \mathbb{R}^d$ is always another tiling. Once this is observed, we see that $\mathbb{R}^d$ induces a natural action on any .

\begin{definition}
For $v \in \mathbb{R}^d$ and $x \in X_\mathcal{T}$, let $T_{v}x$ be the tiling of $\mathbb{R}^d$ in which each tile $D \in x$ has been translated by $-v$. That is,
\[
T_{v}x: = \{D-v : D \in x\}.
\]
We denote this action of $\mathbb{R}^d$ on $X_\mathcal{T}$ by $T$. If $x'\subseteq x$ is a patch, then $T_vx'$ stands for the set $\{D-v : D \in x'\}$.
\end{definition}

\begin{definition}
Let $\mathcal{T}$ be a finite set of distinct prototiles (of congruence) in $\mathbb{R}^d$. A \emph{$\mathcal{T}$-patch} is a finite subset $y$ of some tiling $x \in X_{\mathcal{T}}$ such that the union of all tiles in $y$ is a connected set in $\mathbb{R}^d$. We call the union of the tiles of $y$ the support of the patch $y$, denoted by $supp(y)$.

We extend the notion of equivalence to patches; that is, two patches are equivalent if one is a translation of the other, and the corresponding equivalence classes are called \emph{protopatches}. We say that two protopatches are congruent if they are related by an orientation-preserving isometry, and the corresponding equivalence classes are referred to as \emph{congruence protopatches}.

If $\mathcal{T}$ consists of prototiles (resp. congruence prototiles), $\mathcal{T}^*$ denotes the set of all protopatches (resp. congruence protopatches), i.e, equivalence classes of $\mathcal{T}$-patches, and we use $\mathcal{T}^{(n)}$ to denote the collection of protopatches  (resp. congruence prototiles) of $\mathcal{T*}$ such that the representative of protopatch consists of $n$ tiles, that $$\mathcal{T}^{(n)}:=\{[x']\in \mathcal{T}^*:x' \text{ has }n  \text{ tiles}\}.$$

We will abuse of terminology and refer to protopatches as if they were patches, hence, by a protopatch of $n$ tiles we refer to a protopatch whose representative has $n$ tiles, and sometimes, as long as there will not be any confusion, we will consider $\mathcal{T}^*$ to be a set that contains one and only one representative of each protopatch (resp. congruence protopatch).
\end{definition}

Let $K \subset \R^d$ be compact connected and let $x\in X_{\mathcal{T}}$. We denote by $x[[K]]$ the collection of patches $x'$ contained in $x$ with the property that $\mathrm{supp}(x') \supseteq K$. We denote $x(K):= \{D\in x: D\cap K\neq\emptyset\}$.

\begin{proposition}
    Let $x\in X_{\mathcal{T}}$ be a tiling and let $K\subseteq \R^d$ be a compact connected set. Then $x(K)\in x[[K]]$, i.e, $x(K)$ is a patch.
\end{proposition}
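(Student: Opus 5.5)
To show that $x(K)$ is a patch I need three things: that it is a finite subset of $x$, that its support is connected, and that $\mathrm{supp}(x(K))\supseteq K$. Membership in $x[[K]]$ then follows at once. I will prove them in this order: covering, finiteness, connectedness.

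\emph{Covering.} This is immediate. Since $x$ covers $\R^d$, every $k\in K$ lies in some tile $D\in x$. That tile meets $K$, so $D\in x(K)$, and hence $K\subseteq \mathrm{supp}(x(K))$.

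\emph{Finiteness.} This is the step I expect to be the main obstacle, because the definitions as stated do not bound tile sizes explicitly. The finiteness of $\mathcal{T}$ supplies what is needed.
\begin{enumerate}
\item Each prototile has a representative, and there are finitely many, so there are constants $R>0$ and $\rho>0$ with the following property: every tile of $x$ has diameter at most $R$ and contains an open ball of radius at least $\rho$. The ball exists because each representative has nonempty interior. Both quantities are preserved under translations and rotations, so the same constants work for congruence prototiles.
\item Every tile meeting $K$ lies in the compact neighbourhood $K_R=\{z: d(z,K)\le R\}$, which has finite Lebesgue measure.
\item The tiles of $x(K)$ have pairwise disjoint interiors, and each interior contains a ball of volume $\lambda(B(0,\rho))>0$. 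Therefore $|x(K)|\cdot\lambda(B(0,\rho))\le \lambda(K_R)<\infty$, and $x(K)$ is finite.
\end{enumerate}

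\emph{Connectedness.} Set $S=\mathrm{supp}(x(K))=K\cup\bigcup_{D\in x(K)}D$, where the equality uses the covering step. Each tile $D$ is connected by the standing assumption, and each $D$ meets the connected set $K$. So every $K\cup D$ is connected, and these sets all contain $K$. A union of connected sets sharing a common point (or a common connected subset) is connected, so $S$ is connected. If $K=\emptyset$, then $x(K)$ is empty and the statement is vacuous, so I assume $K\neq\emptyset$.

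Combining the three steps, $x(K)$ is a finite subset of $x$ with connected support containing $K$. Hence it is a $\mathcal{T}$-patch, and $x(K)\in x[[K]]$.
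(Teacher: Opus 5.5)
Your proof is correct and follows essentially the same route as the paper: connectedness because each tile of $x(K)$ is connected and meets the connected set $K$, and finiteness from a volume bound using a uniform diameter bound, a uniform inscribed-ball radius and pairwise disjoint interiors. The differences are cosmetic: you state explicitly the covering $K\subseteq \mathrm{supp}(x(K))$, which the paper uses silently when it writes $K\cup\operatorname{supp}(x(K))=\operatorname{supp}(x(K))$, and you confine the tiles to the neighbourhood $K_R$ instead of the ball $B_{r+R}$.
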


\begin{proof}
    Since every element of $x(K)$ is a connected set that intersects the connected set $K$, then $K\cup\displaystyle\bigcup_{D\in x(K)}D=K\cup\operatorname{supp}(x(K))$ is connected. But $K\cup\operatorname{supp}(x(K))= \operatorname{supp}(x(K))$, so $\operatorname{supp}(x(K))$ is connected. To see that $x(K)$ is finite, let $r>0$ such that $K\subseteq B_r$. Let $\delta>0$ such that every tile contains a ball of radius $\delta$ in its interior, and consider 

    \begin{equation*}
        R:=2\max\{\operatorname{diam}(T): T \in\mathcal{T}\}.
    \end{equation*}

    It follows that every tile in $x(K)$ must be contained in $B_{r+R}$. Therefore, using that every tile contains a ball of radius $\delta$ in its interior, together with the fact that the interiors of the tiles are pairwise disjoint, we obtain the following inequality regarding the cardinality of $x(K)$

    \begin{equation*}
        0<|x(K)|\lambda(B_\delta)\leq \lambda(B_{r+R})<\infty,
    \end{equation*}

     so $x(K)$ is finite.
\end{proof}

Note that for an arbitrary compact connected set $K$, there might no be a patch in $x[[K]]$ that is the smallest in the sense of inclusion, to see this, just consider a point that lies in the boundary of two tiles in $x$. Nevertheless, if $K$ is also a tile set (not necessarily in $\mathcal{T}$), then there is such a patch.

\begin{proposition}\label{prop:tile_interior}
    Let $x\in X_{\mathcal{T}}$ and $K\subseteq\R^d$ be a compact connected set equal to the closure of its interior, i.e, a tile. For every point in $K$ there is a tile $D$ in $X_{\mathcal{T}}$ such that $x\in D$ and $\inte{D}\cap\inte{K}\neq\emptyset$.
\end{proposition}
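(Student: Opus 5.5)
The statement has a notational clash: the letter $x$ denotes both the tiling and the point. I read it as follows. For every point $p\in K$ there is a tile $D\in x$ with $p\in D$ and $\inte{D}\cap\inte{K}\neq\emptyset$. The plan is a local finiteness argument combined with a Baire/measure-type argument near $p$.

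\textbf{Step 1: only finitely many tiles meet a small ball around $p$.} Fix $p\in K$ and $\epsilon>0$. Apply the earlier proposition to the compact connected set $\overline{B(p,\epsilon)}$: the patch $x(\overline{B(p,\epsilon)})$ is finite. Every tile not containing $p$ is compact, so it has positive distance from $p$. Since there are finitely many such tiles in this patch, we can shrink $\epsilon$ to some $\epsilon_0>0$ so that every tile of $x$ meeting $B(p,\epsilon_0)$ contains $p$. Call these tiles $D_1,\dots,D_m$. Then $m\ge1$, and $B(p,\epsilon_0)\subseteq D_1\cup\dots\cup D_m$ because $x$ covers $\R^d$.

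\textbf{Step 2: one of these tiles has interior meeting $\inte{K}$.} Since $K=\overline{\inte{K}}$ and $p\in K$, the set $U:=\inte{K}\cap B(p,\epsilon_0)$ is a nonempty open set. It is covered by the finitely many closed sets $D_i$. Suppose $\inte{D_i}\cap U=\emptyset$ for all $i$. Each $D_i=\overline{\inte{D_i}}$, so $D_i\setminus\inte{D_i}=\partial D_i$. Thus $U\subseteq\bigcup_i\partial D_i$. Each $\partial D_i$ is closed with empty interior, because any open set inside $D_i$ lies in $\inte{D_i}$. A finite union of closed nowhere dense sets is nowhere dense, which contradicts $U$ being nonempty and open. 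Hence some $D=D_i$ satisfies $\inte{D}\cap\inte{K}\neq\emptyset$, and $p\in D$ by Step 1.

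\textbf{Main obstacle.} The delicate point is Step 1, namely ensuring that the chosen tile actually contains $p$ and does not merely come close to it. This is exactly where finiteness of $x(\overline{B(p,\epsilon)})$ is needed: an infinite family of tiles accumulating at $p$ would break the argument. The nowhere-density fact in Step 2 is elementary; I would prove it directly by induction, shrinking an open set successively to avoid each closed boundary in turn.
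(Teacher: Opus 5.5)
Your proof is correct, but it takes a different route from the paper's.

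The paper argues globally. It first shows that every $v\in\inte{K}$ lies in a tile of $\mathcal{A}=\{A\in x:\inte{A}\cap\inte{K}\neq\emptyset\}$: if $v\in A$ and $B(v,\delta)\subseteq\inte{K}$, then $B(v,\delta)$ meets $\inte{A}$ because $A=\overline{\inte{A}}$. Since $\mathcal{A}\subseteq x(K)$ is finite, $\bigcup_{A\in\mathcal{A}}A$ is then a closed set containing $\inte{K}$, hence containing $K=\overline{\inte{K}}$.

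You instead work pointwise. You localize at $p$, using finiteness of $x(\overline{B(p,\epsilon)})$ and the positive distance from $p$ to the tiles avoiding it, so that every tile meeting $B(p,\epsilon_0)$ already contains $p$. You then use the fact that a finite union of closed sets with empty interior cannot contain the nonempty open set $\inte{K}\cap B(p,\epsilon_0)$.

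The paper's route is shorter. It also yields directly the covering $K\subseteq\operatorname{supp}(x[K])$ that the next proposition uses, but it relies on each tile of $x$ being the closure of its interior. Your route needs the extra localization step and the Baire-type lemma. In exchange, it never actually uses regular closedness of the $D_i$: the identity $D_i\setminus\inte{D_i}=\partial D_i$ and the emptiness of the interior of $\partial D_i$ hold for any closed set, so your appeal to $D_i=\overline{\inte{D_i}}$ is superfluous. Hence your argument works for any locally finite cover of $\R^d$ by closed sets, with only $K=\overline{\inte{K}}$ required.
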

\begin{proof}
    Let $w\in K$ and consider $$\mathcal{A}:=\{A\in x : \inte{A} \cap\inte{K}\neq\emptyset\}.$$Let $v\in\inte{K}$, let $A\in x$ such that $v\in A$ and let $\delta>0$ such that the ball $B(w,\delta)$ is contained in $\inte{K}$. Since $A$ is a tile, $B(w,\delta)\cap \inte{A}\neq\emptyset$, so $\inte{A}\cap\inte{K}\neq\emptyset$, and $A\in\mathcal{A}$, therefore $\mathcal{A}\neq\emptyset$. Furthermore, every point in $\inte{K}$ lies in a tile in $\mathcal{A}$.  
    
    Since $w\in K$ and $K$ is equal to the closure of its interior, there is a sequence $(w_n)_{n\in\N}$ in $\inte{K}$ such that $w_n \to w$. It is clear that for every $n\in\N,  w_n\in\displaystyle\bigcup_{A\in\mathcal{A}}A$, and observe that $A$ is finite because it is a subset of $x(K)$. Thence, $w$ is in the closed set $\displaystyle\bigcup_{A\in\mathcal{A}}A$. We deduce that there is a tile $D\in\mathcal{A}$ such that $w\in D$.
\end{proof}

For a compact connected set $K\subseteq\R^d$ that is equal to the closure of its interior we define $$x[K]:=\{A\in x:\inte{A}\cap\inte{K}\neq\emptyset\}.$$

\begin{proposition}\label{prop:smallest_patch}
     Let $x\in X_{\mathcal{T}}$ and $K\subseteq\R^d$ be a compact connected set equal to the closure of its interior, i.e, a tile. $x[K]$ is the smallest patch in $x[[K]]$.
\end{proposition}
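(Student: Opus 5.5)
We have to show three things: that $x[K]$ is a patch (finite, with connected support), that its support contains $K$, so that $x[K]\in x[[K]]$, and that $x[K]\subseteq y$ for every $y\in x[[K]]$.

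\textbf{Finiteness and covering.} If $\inte{A}\cap\inte{K}\neq\emptyset$ then $A\cap K\neq\emptyset$. Hence $x[K]\subseteq x(K)$, and $x(K)$ is finite by the first proposition of this section. By Proposition \ref{prop:tile_interior}, every $w\in K$ lies in some tile $D\in x$ with $\inte{D}\cap\inte{K}\neq\emptyset$, that is, in some $D\in x[K]$. Thus $K\subseteq\operatorname{supp}(x[K])$.

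\textbf{Minimality.} Let $y\in x[[K]]$ and $A\in x[K]$. Choose a point $p\in\inte{A}\cap\inte{K}$ and a ball $B(p,\delta)\subseteq \inte{A}\cap\inte{K}$. Since $K\subseteq\operatorname{supp}(y)$ and $y$ is a finite union of tiles, $B(p,\delta)$ is covered by the finitely many tiles of $y$. By Baire, or simply because a finite union of closed sets with empty interior relative to the ball has empty interior, some tile $D\in y$ satisfies $\inte{D}\cap B(p,\delta)\neq\emptyset$. One can also argue measure-theoretically: the boundaries of tiles need not be null, but $D\cap B(p,\delta)$ has nonempty interior for some $D$, and since $D=\overline{\inte{D}}$ this gives $\inte{D}\cap B(p,\delta)\neq\emptyset$. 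Then $\inte{D}\cap\inte{A}\neq\emptyset$, and because distinct tiles of $x$ have disjoint interiors, $D=A$. Hence $A\in y$, so $x[K]\subseteq y$.

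\textbf{Connectedness.} This is the step I expect to be the main obstacle, since the tiles of $x[K]$ meet $K$ but $K$ is not obviously contained in their union in a way that forces connectedness a priori. We argue as follows. By the covering step, $K\subseteq \operatorname{supp}(x[K])$. Each $A\in x[K]$ is connected and meets the connected set $K$, because $A\cap K\supseteq \inte{A}\cap\inte{K}\neq\emptyset$. Therefore
\[
K\cup\bigcup_{A\in x[K]}A
\]
is connected, being a union of connected sets each meeting the connected set $K$. Since $K\subseteq\operatorname{supp}(x[K])$, this union equals $\operatorname{supp}(x[K])$, which is therefore connected. This is exactly the argument used for $x(K)$, now combined with the covering step.

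Finally, $x[K]$ is a finite subset of the tiling $x$ with connected support, so it is a patch. It contains $K$ in its support, so $x[K]\in x[[K]]$, and by the minimality step it is contained in every member of $x[[K]]$. Hence $x[K]$ is the smallest patch in $x[[K]]$.
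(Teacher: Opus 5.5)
Your proof is correct and follows the same route as the paper: covering via Proposition~\ref{prop:tile_interior}, finiteness from $x[K]\subseteq x(K)$, connectedness by adjoining $K$ to a union of connected tiles that each meet $K$, and minimality from the disjointness of interiors of distinct tiles of $x$. The only difference is in the minimality step, where the paper does not need your Baire argument: it takes a single point $p\in\inte{A}\cap\inte{K}$ and a tile $D\in y$ containing it, and since $D=\overline{\inte{D}}$, $\inte{D}$ meets the open set $\inte{A}$, which forces $D=A$.
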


\begin{proof}
     By Proposition \ref{prop:tile_interior}, $x[K]$ covers $K$. Since every element of $x[K]$ is a connected set that intersects the connected set $K$, then $K\cup\displaystyle\bigcup_{D\in x[K]}D=K\cup\operatorname{supp}(x[K])$ is connected. But $K\cup\operatorname{supp}(x[K])= \operatorname{supp}(x[K])$, so $\operatorname{supp}(x[K])$ is connected. $x[K]\subseteq x(K)$, thus $x[K]$ is finite. To see that is the smallest, let $x'\in x[[K]]$, let $D\in x[K]$, and let $d\in \inte{D}\cap\inte{K}$. Since $d\in K$, there $A\in x'$ such that $d\in A$, therefore $A\cap\inte{D}\neq\emptyset$, hence $A=D$ and $D\in x'$, thus $x[K]\subseteq x'$. 
\end{proof}

Observe that the intersections of two patches of a tiling might not be a patch, since the union of the tiles that are in the intersection of two patches need not be connected. However, if two patches $x',y'$ of a same tiling $x$ cover a same tile set $K$, then $x' \cap y'$ contains a subset that is a patch that covers $K$.

\begin{proposition}\label{Prop:interseccion_de_parches}
    Let $x\in X_{\mathcal{T}}$ be a tiling, let $K\subset \R^d$ a compact connected set equal to the closure of its interior, i.e, a tile set. Let $x',y'\in x[[K]]$. Then, there there is $z'\subseteq x'\cap y'$ such that $z'\in x[[K]]$.
\end{proposition}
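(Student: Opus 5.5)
Take $z':=x[K]$ and show that it lies in both $x'$ and $y'$; it is then a subset of $x'\cap y'$ and belongs to $x[[K]]$. This turns the statement into a direct corollary of Proposition \ref{prop:smallest_patch}.

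\textbf{Step 1.} Since $K$ is compact, connected and equal to the closure of its interior, Proposition \ref{prop:smallest_patch} applies. It says that $x[K]$ is a patch, that $\operatorname{supp}(x[K])\supseteq K$ (so $x[K]\in x[[K]]$), and that $x[K]\subseteq w'$ for every $w'\in x[[K]]$.

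\textbf{Step 2.} Apply this minimality to $w'=x'$ and to $w'=y'$. This gives $x[K]\subseteq x'$ and $x[K]\subseteq y'$, hence $x[K]\subseteq x'\cap y'$.

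\textbf{Step 3.} Set $z':=x[K]$. The required properties are exactly those already recorded in Step 1: $z'$ is finite, has connected support, consists of tiles of $x$, and its support covers $K$.

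The only delicate point is already handled inside Proposition \ref{prop:smallest_patch}. There, minimality uses that every $d\in\inte{D}\cap\inte{K}$ lies in some tile $A$ of the covering patch, and that $A\cap\inte{D}\neq\emptyset$ forces $A=D$. The reason is that distinct tiles of a tiling have disjoint interiors, and a tile meeting the interior of another tile must share interior points with it, because tiles are the closures of their interiors. I would spell out this last justification if needed, since it is only implicit in the earlier proof. The point of the argument is that we never need $x'\cap y'$ itself to be a patch: the smallest patch $x[K]$ sits inside it.
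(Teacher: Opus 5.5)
Your proof is correct and is essentially the paper's own argument: the paper's one-line proof also takes $z'=x[K]$ and invokes Proposition~\ref{prop:smallest_patch} to get $x[K]\subseteq x'\cap y'$ with $x[K]\in x[[K]]$. Your side remark, that $A\cap\inte{D}\neq\emptyset$ forces $\inte{A}\cap\inte{D}\neq\emptyset$ because $A=\overline{\inte{A}}$, correctly fills in a step that the earlier proposition leaves implicit.
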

\begin{proof}
   By Proposition \ref{prop:smallest_patch}, $x[K]\subseteq x'\cap y'$.
\end{proof}
\begin{definition}
We say that a full tiling space has \emph{finite local complexity (f.l.c) under translation} if $\mathcal{T}$ consists of a finite number of prototiles and $\mathcal{T}^{(2)}$ is finite. That is, there are only finitely many adjacent pairs of tiles.
\end{definition}

\begin{definition}
A full tiling space $X_\mathcal{T}$ has \emph{finite local complexity under the Euclidean group} if $\mathcal{T}$ consists of a finite number of congruence prototiles and $\mathcal{T}^{(2)}$ is finite.
\end{definition}

Sometimes the geometry of the tiles themselves imposes this condition of finite local complexity under translation, but usually it must be added as an extra assumption. When working with polygonal tiles, a typical way to achieve finite local complexity under translation is to add ``tabs'' that force the tiles to fit together in only finitely many ways, like pieces of a puzzle. Another way to achieve finite local complexity is to allow tiles to meet only \emph{edge-to-edge}.

Intuitively, since every protopatch formed by three tiles or more is constructed by attaching a prototile to a protopatch in  $\mathcal{T}^{(2)}$, f.l.c will guarantee that $\mathcal{T}^{(n)}$ is finite for every $n$. However, we will provide the basic argument that justifies this idea.

\begin{proposition}\label{prop:patches_conectados}
    Let $p$ be a patch with more than one tile, then there is a tile $D\in p$ such that $p\setminus\{D\}$ is still a patch.
\end{proposition}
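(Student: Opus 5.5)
The plan is to reduce the statement to the classical graph-theoretic fact that every finite connected graph with at least two vertices has a vertex whose removal leaves the graph connected. To the patch $p=\{D_1,\dots,D_n\}$, $n\ge 2$, I associate the \emph{adjacency graph} $G_p$: its vertices are the tiles of $p$, and $D_i,D_j$ ($i\neq j$) are joined by an edge iff $D_i\cap D_j\neq\emptyset$. Any subset of a patch is finite and is a subset of a tiling. So for any $q\subseteq p$, $q$ is a patch iff $\operatorname{supp}(q)$ is connected. The whole proof therefore rests on one lemma.

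\emph{Lemma.} For a finite nonempty family $q$ of tiles, $\operatorname{supp}(q)$ is connected iff the graph $G_q$ is connected.

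For the forward direction, suppose $G_q$ is disconnected. Split $q$ into two nonempty families $q_1,q_2$ with no edges between them. Then $\operatorname{supp}(q_1)$ and $\operatorname{supp}(q_2)$ are disjoint, nonempty, and closed, since each is a finite union of compact sets. They cover $\operatorname{supp}(q)$, so $\operatorname{supp}(q)$ is disconnected.

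For the converse, suppose $G_q$ is connected. Each tile is connected, by the standing assumption that tiles are connected. Along any path in $G_q$, consecutive tiles intersect, so the union of the tiles along the path is connected. Hence the union of any tile with any other tile lies in a connected subset of $\operatorname{supp}(q)$, and therefore $\operatorname{supp}(q)$ is connected.

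With the lemma in hand, $G_p$ is a finite connected graph on $n\ge2$ vertices. I pick a spanning tree of $G_p$, obtained for instance by repeatedly deleting an edge lying on a cycle, which preserves connectedness. A finite tree with at least two vertices has a leaf $D$: take the endpoint of a longest path, since a longest path cannot be extended and the tree has no cycles. Removing a leaf from a tree leaves a tree on the remaining vertices, which is a connected spanning subgraph of $G_{p\setminus\{D\}}$. So $G_{p\setminus\{D\}}$ is connected.

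By the lemma, $\operatorname{supp}(p\setminus\{D\})$ is connected. It is nonempty because $n\ge2$, and it is a finite subset of the same tiling, so $p\setminus\{D\}$ is a patch.

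The main obstacle is the forward direction of the lemma. It is the only place where topology genuinely enters, and it uses two facts: that finite unions of compact tiles are closed, and that tiles with no edge between them are genuinely disjoint. This direction is delicate because tiles may touch only along boundaries, so I must define adjacency by nonempty intersection, not by intersecting interiors. The graph-theoretic step is standard. If the paper prefers to avoid spanning trees, I would instead choose $D$ to be a tile at maximal graph distance from a fixed tile $D_1$. Every other tile then has a shortest path to $D_1$ that avoids $D$, because tiles on such a path are strictly closer to $D_1$ than the path's start, hence strictly closer than $D$.
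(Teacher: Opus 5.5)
Your proof is correct and follows the same route as the paper: the intersection graph $G_p$, connectedness of $G_p$ via the pairwise disjoint closed supports of its components, and a vertex of a finite connected graph whose removal preserves connectedness. You are in fact more complete, since you also prove the reverse implication (connectedness of $G_{p\setminus\{D\}}$ gives connectedness of $\operatorname{supp}(p\setminus\{D\})$, using that tiles are connected) and the existence of the removable vertex; the paper's final sentence takes both for granted.
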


\begin{proof}
    All we need to prove is that there is $D\in p$ such that $\displaystyle\bigcup_{A\in p\setminus\{D\} }A$ is still connected.

    Consider the graph $G_p$ where the vertices are the tiles of $p$ and two tiles of $D_1,D_2 \in p$ are connected by an edge if and only if $D_1 \cap D_2 \neq\emptyset$. We claim that $G_p$ is connected. To see this, suppose that $G_p$ is not connected. Let $v_{1},...,v_n$ be the set of vertices of each of its connected components. Then, it follows that for every $i\in\{1,...,n\}$, $v_i$ is a patch and $\operatorname{supp}(v_i)$ is closed. Consequently, $\operatorname{supp}(v_1),...,\operatorname{supp}(v_n)$ are closed sets pairwise disjoint, therefore, $\displaystyle\bigcup_{i=1}^{n}\operatorname{supp}(v_i)=\operatorname{supp}(p)$ is disconnected, which is a contradiction.

    Since $G_p$ is a connected graph, we have that there is a vertex $D$ such that once we remove it, the remaining graph is still connected. Hence $p\setminus\{D\}$ is a patch.
\end{proof}

\begin{proposition}\label{prop: flc}
    A full tiling space has finite local complexity if and only if $\mathcal{T}^{(n)}$ is finite for every $n\in\N$.
\end{proposition}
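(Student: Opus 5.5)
The reverse implication is immediate: if $\mathcal{T}^{(n)}$ is finite for every $n$, then in particular $\mathcal{T}^{(2)}$ is finite. Also, $\mathcal{T}^{(1)}$ is finite because it is identified with $\mathcal{T}$, which is assumed finite. So the content lies in the forward implication. I will prove it by induction on $n$, with base cases $n=1,2$ given by the hypotheses.

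For the inductive step, assume $\mathcal{T}^{(n)}$ is finite for some $n\geq 2$ and let $p$ be a patch with $n+1$ tiles. By Proposition \ref{prop:patches_conectados} there is $D\in p$ such that $q:=p\setminus\{D\}$ is a patch with $n$ tiles. Since $\operatorname{supp}(p)$ is connected, the graph argument in the proof of Proposition \ref{prop:patches_conectados} shows that $D$ meets some tile $A\in q$. Hence $\{A,D\}$ is a patch: it is a subset of a tiling and its support is connected because $A\cap D\neq\emptyset$. So $[\{A,D\}]\in\mathcal{T}^{(2)}$.

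I then define a map from protopatches in $\mathcal{T}^{(n+1)}$ to data drawn from a finite set, and show the map is finite-to-one. The datum consists of
\begin{itemize}
\item the fixed representative $q_0$ of $[q]$ in $\mathcal{T}^{(n)}$, so that $q=q_0+v$;
\item the tile $A_0=A-v\in q_0$;
\item the fixed representative $\{B_0,C_0\}$ of $[\{A,D\}]\in\mathcal{T}^{(2)}$, together with a choice of which of $B_0,C_0$ corresponds to $A$.
\end{itemize}
There are finitely many choices of $q_0$ by the induction hypothesis, finitely many choices of $A_0\in q_0$ (at most $n$), and finitely many pairs in $\mathcal{T}^{(2)}$.

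The key step, and the main subtlety, is that this datum determines $p$ up to translation. Suppose $\{A,D\}=\{B_0,C_0\}+u$ with $A=B_0+u$. Then $B_0+u=A=A_0+v$, so $B_0=A_0+(v-u)$. For fixed tiles $A_0$ and $B_0$ there is at most one vector $w$ with $B_0=A_0+w$: if $A_0+w=A_0+w'$, then translation by $w-w'$ preserves the compact set $A_0$, and comparing, say, a lexicographically maximal point forces $w=w'$. Hence $v-u$ is determined by the datum, and so is
\[
p-v=q_0\cup\{D-v\}=q_0\cup\{C_0+u-v\}.
\]
Thus $[p]$ is determined by the datum, which gives $|\mathcal{T}^{(n+1)}|\leq |\mathcal{T}^{(n)}|\cdot n\cdot 2|\mathcal{T}^{(2)}|<\infty$.

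In the congruence setting the same scheme applies, with translations replaced by orientation-preserving isometries. The uniqueness step then needs the stabilizer of a tile, within the orientation-preserving isometries, to be finite, or else a recording of that finite ambiguity in the datum. I expect this uniqueness point to be the only real obstacle; everything else is bookkeeping.
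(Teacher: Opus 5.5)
Your proof is correct and uses the same decomposition as the paper. You remove a tile $D$ via Proposition \ref{prop:patches_conectados}, attach it to an adjacent tile $A$ of the remaining $n$-tile patch, and conclude from the finiteness of $\mathcal{T}^{(n)}$, of $\mathcal{T}^{(2)}$, and of the possible positions of $A$ inside the $n$-tile representative. The paper runs this as a contradiction through successive subsequence extractions, whereas your direct injection into a finite set of data is cleaner and gives the bound $|\mathcal{T}^{(n+1)}|\leq 2n\,|\mathcal{T}^{(n)}|\,|\mathcal{T}^{(2)}|$. It also spells out two points the paper's final step leaves implicit: a tile determines the translation carrying it to an equivalent tile, and one must track which tile of the pair plays the role of $A$. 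Your caveat about the congruence case is well founded, since a tile such as an annulus has an infinite rotational stabiliser.
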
 
\begin{proof}
    By induction, assume that $\mathcal{T}^{(n)}$ is finite for $n\geq2$. Observe that, by Proposition \ref{prop:patches_conectados}, the elements of $\mathcal{T}^{(n+1)}$ may be viewed as the elements of $\mathcal{T}^{(n)}$ where some of its tiles is attached to another tile. Since $\mathcal{T}$, $\mathcal{T}^{(2)}$ and $\mathcal{T}^{(n)}$ are finite, there is only finitely many ways to produce an element of $\mathcal{T}^{(n+1)}$ from an element of $\mathcal{T}^{(n)}$. That is, $\mathcal{T}^{(n+1)}$ is finite.

    Formally, let us suppose that $\mathcal{T}^{(n+1)}$ is infinite. Let $\{x_k:k\in\N\}$ be a infinite countable set of pairwise non-equivalent patches of $n+1$ tiles such that $\{[x_k]:k\in\N\}\subseteq \mathcal{T}^{(n+1)}$, let us denot $\mathcal{P}:=\{x_k:\in\N\}$. For each $k\in\N$ let us consider $D_{k} \in x_k$ such that $x_k\setminus\{D_k\}$ is a patch so that $[x_k\setminus\{D_k\}]\in \mathcal{T}^{(n)}$ (by Proposition \ref{prop:patches_conectados}). Consider the map $f\colon \mathcal{P}\to \mathcal{T}^{(n)}$ given by $f([x_k]):=[x_k\setminus\{D_k\}]$. Since $\mathcal{P}$ is infinite, there is $[y]\in \mathcal{T}^{(n)} $ such that $f^{-1}(\{[y]\})$ contains a countable infinite subset, hence $\mathcal{P}\subseteq f^{-1}(\{[y]\})$ (up to a subsequence). For every $k\in\N$  $((x_k\setminus\{D_k\})\cup D_k$ produces at least one patch of two tiles with $D_k$ as one of its tiles. Choose such a patch and denote it by $\{D_k ,A_k\}$. Consequently, for every $k\in\N$, $ A_k\in x_k\setminus\{D_k\}$. Since $\mathcal{T}$ is finite, we have that (up to a subsequence) $$\forall i,j\in\N,A_i\sim A_j.$$Since $\mathcal{T}^{(2)}$ is finite, it follows that for every $i,j\in \N$ (up to a subsequence) 
    \begin{equation*}
        \{D_i ,A_i\} \text{ is equivalent to } \{D_j ,A_j\}.
    \end{equation*}
     Since for every $i,j\in\N$ we have that $x_i\setminus\{D_i\}$ is a translation of $x_j\setminus\{D_j\}$, and since $\mathcal{P}$ is infinite, it necessary that for any fix $i\in \N$, $x_i\setminus\{D_i\}$ contains infinitely many translations of $A_i$, which is a contradiction because $x_i\setminus\{D_i\}$ is a patch (it has finitely many tiles). We conclude that $\mathcal{T}^{(n+1)}$ is finite.
\end{proof}

\section{Hausdorff distance}\label{sec:hausdorff}
Since we will be working with tiles and patches, which are sets, it will be useful to have a notion of convergence of sets. Thus, we recall the Hausdorff distance.

\begin{definition}
    Let $A,B\subseteq \R^d$ and consider a norm $||\cdot||$ in $\R^d$. The Hausdorff distance between $A$ and $B$ is defined as

\begin{equation*}
    h(A,B):=\max\{\sup_{a\in A}d(a,B), \sup_{b\in B}d(b,A) \}.
\end{equation*}
\end{definition}

We recall that $h(\cdot,\cdot)$ is a metric on $\{X\subseteq \R^d: X \text{ is compact}\}$, and we will enunciate some propositions regarding the Hausdorff distance that will be used in the sequel. 

\begin{proposition}\label{prop:distancia_traslaciones}
    Let $A\subseteq \R^d$ be compact and let $t\in \R^d$. Then $$h(A,A+t)=||t||.$$
\end{proposition}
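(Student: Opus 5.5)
We prove the two inequalities $h(A,A+t)\le\|t\|$ and $h(A,A+t)\ge\|t\|$ separately. We assume $A\neq\emptyset$, since otherwise the Hausdorff distance is not meaningful. Both halves use only the definition of $h$, the triangle inequality for $\|\cdot\|$, and compactness of $A$.

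For the upper bound, fix $a\in A$. Since $a+t\in A+t$, we get $d(a,A+t)\le\|a-(a+t)\|=\|t\|$. Symmetrically, each point of $A+t$ has the form $b=a+t$ with $a\in A$, and then $d(b,A)\le\|t\|$. Taking suprema in both terms of the definition gives $h(A,A+t)\le\|t\|$.

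For the lower bound we must find a single point of one set whose distance to the other set is at least $\|t\|$. Assume $t\neq 0$, since the case $t=0$ is trivial. The naive choice of an arbitrary point fails, because $A$ and $A+t$ may overlap heavily. Instead we choose an extremal point of $A$ in the direction of $t$. The norm is arbitrary, so we use a supporting functional: by Hahn--Banach (or simply finite-dimensional duality), there is a linear functional $\varphi$ with $\varphi(t)=\|t\|$ and $|\varphi(z)|\le\|z\|$ for all $z$. Since $A$ is compact and $\varphi$ is continuous, $\varphi$ attains its maximum on $A$ at some $a_0\in A$. Set $b_0:=a_0+t\in A+t$. For any $a\in A$,
\[
\|b_0-a\|\ \ge\ \varphi(b_0-a)=\varphi(a_0)-\varphi(a)+\varphi(t)\ \ge\ \|t\|,
\]
because $\varphi(a_0)\ge\varphi(a)$. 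Hence $d(b_0,A)\ge\|t\|$, so $\sup_{b\in A+t}d(b,A)\ge\|t\|$, and therefore $h(A,A+t)\ge\|t\|$.

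The main obstacle is this lower bound for a general norm, since there is no inner product. For the Euclidean norm one would simply take $\varphi(z)=\langle z,t\rangle/\|t\|$. In general one needs the existence of a norming functional for $t$, which we would cite as a standard consequence of Hahn--Banach in $\R^d$. Compactness of $A$ is used only to guarantee that the maximizer $a_0$ exists.
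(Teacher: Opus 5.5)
Your proof is correct, and its skeleton matches the paper's. The upper bound comes from pairing $a$ with $a+t$. The lower bound comes from a point of $A$ that is extremal in the direction of $t$, with the norm bounded below by a linear functional.

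The difference is the choice of functional. The paper uses $z\mapsto\langle z,t/\|t\|\rangle$. It takes $\bar a$ minimizing this over $A$, shows $d(\bar a,A+t)\ge\|t\|$, and repeats the argument with the maximizer for the other supremum. This tacitly requires the Euclidean norm at two points: the Cauchy--Schwarz step $\|z\|\ge|\langle z,t/\|t\|\rangle|$, and the identity $\langle t,t/\|t\|\rangle=\|t\|$. For the sup norm on $\R^2$ with $t=(1,1)$, both fail.

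The paper's notation section allows an arbitrary norm. Your norming functional $\varphi$, with $\varphi(t)=\|t\|$ and $|\varphi(z)|\le\|z\|$, is what makes the argument valid in that generality. It can be obtained from Hahn--Banach or from a supporting hyperplane of the unit ball at $t/\|t\|$, and it reduces to the paper's functional in the Euclidean case.

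Two smaller points. You prove the lower bound for only one of the two suprema, which suffices because $h$ is their maximum; the paper proves both. Your remark that $A$ must be nonempty is a caveat the paper omits, and the statement needs it.
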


\begin{proof}
    The case $t=0$ is trivial so suppose $t\neq 0$. We begin by showing that $\displaystyle\sup_{a\in A}d(a,A+t)=||t||$. First, observe that for every $a\in A$ we have $$d(a,A+t)\leq ||a-(a+t)||=||t||,$$ so $\displaystyle\sup_{a\in A}d(a,A+t)\leq||t||$. To see that  $\displaystyle\sup_{a\in A}d(a,A+t)\geq||t||$, consider $\bar{a}\in A$ such that 
\begin{equation*}
    \inner{\bar{a}}{\frac{t}{||t||}}=\min_{x\in A}\inner{x}{\frac{t}{||t||}}.
\end{equation*}

If $a\in A$, we obtain that

\begin{equation*}
    \begin{aligned}
        ||\bar{a}-(a+t)||&\geq|\inner{\bar{a}-(a+t)}{\frac{t}{||t||}}|\\
        &=|\inner{\bar{a}}{\frac{t}{||t||}}-\inner{a}{\frac{t}{||t||}}-||t|||\\
        &=||t||+\inner{a}{\frac{t}{||t||}}-\inner{\bar{a}}{\frac{t}{||t||}}\\
        &\geq||t||.
    \end{aligned}
\end{equation*}

Thus, $d(\bar{a},A+t)\geq ||t||$, therefore, $\displaystyle\sup_{a\in A}d(a,A+t)\geq||t||$ and $$\displaystyle\sup_{a\in A}d(a,A+t)=||t||.$$

Analogously, but using $\bar{a}\in A$ such that $$\inner{\bar{a}}{\frac{t}{||t||}}=\displaystyle\max_{x\in A}\inner{x}{\frac{t}{||t||}},$$
we see that $\displaystyle\sup_{a\in A+t}d(a,A)=||t||$. Therefore $h(A,A+t)=||t||$.
\end{proof}

\begin{corollary}\label{cor:convergencia_traslaciones}
    Let $A\subseteq\R^d$ be a compact set, $t\in\R$ and $(t_n)_{n\in\N}$ a sequence in $\R^d$. Then $A +t_n$ converges to $A+t$ with the Hausdorff distance if and only if $t_n \to t$.
\end{corollary}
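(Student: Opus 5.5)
The plan is to reduce the statement to Proposition \ref{prop:distancia_traslaciones} by means of the translation invariance of the Hausdorff distance. The key identity I would establish first is that for any compact sets $A,B\subseteq\R^d$ and any $s\in\R^d$ we have $h(A+s,B+s)=h(A,B)$. This holds because $d(a+s,B+s)=\inf_{b\in B}||(a+s)-(b+s)||=d(a,B)$ for every $a\in A$. Taking suprema over $a\in A$ (equivalently over $a+s\in A+s$), and symmetrically over $b\in B$, gives equality of both terms in the maximum defining $h$.

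With this in hand, I would compute, for each $n\in\N$,
\begin{equation*}
h(A+t_n,A+t)=h\bigl((A+t)+(t_n-t),\,A+t\bigr)=h\bigl(A,\,A+(t_n-t)\bigr)=||t_n-t||.
\end{equation*}
The first equality is just rewriting $A+t_n=(A+t)+(t_n-t)$. The second uses translation invariance with $s=-t$, together with the symmetry of $h$. The third is Proposition \ref{prop:distancia_traslaciones} applied to the compact set $A$ and the vector $t_n-t$. In particular the case $t_n=t$ is covered, since both sides are then $0$.

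Both directions of the equivalence now follow at once. Convergence $A+t_n\to A+t$ in the Hausdorff distance means $h(A+t_n,A+t)\to0$, which is exactly $||t_n-t||\to0$, i.e.\ $t_n\to t$. I would also note in passing that the statement should read $t\in\R^d$ rather than $t\in\R$.

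The only point requiring any care is the translation-invariance step, and it is routine. There is no real obstacle here, since all the geometric content was already handled in Proposition \ref{prop:distancia_traslaciones}.
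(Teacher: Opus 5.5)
Your argument is correct and is the intended one: the paper states this corollary without proof as an immediate consequence of Proposition \ref{prop:distancia_traslaciones}, via the identity $h(A+t_n,A+t)=||t_n-t||$. You are right that $t\in\R^d$ is meant, and $A\neq\emptyset$ is implicitly needed as well. Your translation-invariance lemma is harmless but unnecessary, since $A+t$ is itself compact and applying the Proposition to it with the vector $t_n-t$ gives that identity directly.
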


\begin{proposition}\label{prop:convergencia_rot}
    Let $A\subseteq\R^d$ be a compact set, $M\in SO(d)$ and $(M_n)_{n\in\N}$ a sequence in $SO(d)$ such that $M_n\to M$. Then, $M_n (A)$ converges to $M(A)$ with the Hausdorff distance.
    
\end{proposition}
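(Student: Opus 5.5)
The plan is to bound $h(M_n(A),M(A))$ directly by an operator norm, in the same spirit as Proposition \ref{prop:distancia_traslaciones}. Since $A$ is compact, it is bounded, so fix $r>0$ with $A\subseteq B_r$, i.e. $\|a\|\le r$ for all $a\in A$. Let $\|\cdot\|_{op}$ be the operator norm on $d\times d$ matrices induced by $\|\cdot\|$. All norms on the finite-dimensional space of matrices are equivalent, so $M_n\to M$ (in whatever norm is used on $SO(d)$, e.g. entrywise) gives $\|M_n-M\|_{op}\to 0$.

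Next I control each half of the Hausdorff distance. For a point $M_n a\in M_n(A)$ with $a\in A$, the point $Ma$ lies in $M(A)$, so
\begin{equation*}
d(M_na,M(A))\le \|M_na-Ma\|\le \|M_n-M\|_{op}\,\|a\|\le r\|M_n-M\|_{op}.
\end{equation*}
Taking the supremum over $a\in A$ gives $\sup_{y\in M_n(A)}d(y,M(A))\le r\|M_n-M\|_{op}$. Pairing $Ma$ with $M_na$ in the same way gives the symmetric bound $\sup_{z\in M(A)}d(z,M_n(A))\le r\|M_n-M\|_{op}$. Therefore
\begin{equation*}
h(M_n(A),M(A))\le r\,\|M_n-M\|_{op}\longrightarrow 0,
\end{equation*}
which is the claim. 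Note that $M_n(A)$ and $M(A)$ are compact, as continuous images of a compact set, so $h$ is indeed the metric on compact sets recalled above.

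There is no real obstacle here. The only point needing care is the equivalence of norms, since the norm $\|\cdot\|$ on $\mathbb{R}^d$ is arbitrary and need not be Euclidean. Because of this, I would avoid any argument that relies on $M_n$ being an isometry; the operator-norm bound above does not use orthogonality at all, so it works for any norm.
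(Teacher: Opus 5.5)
Your proof is correct and is essentially the paper's argument: both bound each half of the Hausdorff distance by pairing $M_na$ with $Ma$, which gives $h(M_n(A),M(A))\le C\|M_n-M\|$ with $C$ a bound on $\|a\|$ for $a\in A$. Your explicit use of the induced operator norm and of norm equivalence makes precise the paper's remark that it uses the same notation for the vector norm and the matrix norm.
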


\begin{proof}
     Since $A$ is compact, let $C\in\R$ such that for every $a\in A, ||a||\leq C$. Let $x\in A$, $d(M_nx,M(A))\leq ||M_nx-Mx||\leq ||M_n-M||||x||\leq C||M_n-M||$ (observe that we are using the same notation for the norm in $\R^d$ and the norm for matrices). Analogously, we have that $d(Mx,M_n(A))\leq C||M_n-M||$. Therefore, $h(M_n(A),M(A))\leq C||M_n-M||,$ and $M_n(A)\to M(A)$ with the Hausdorff distanice.
\end{proof}

\begin{proposition}\label{prop:contencion_convergencia hausdorff}
    Let $A_n,B_n\subseteq \R^d, n\in\N$, be compact sets. Let $A,B\subseteq\R^d$ be compact sets such that $A_n\to A$ and $B_n\to B$. Suppose that for every $n\in\N, A_n\subseteq B_n$. Then $A\subseteq B$.
\end{proposition}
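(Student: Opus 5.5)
We argue pointwise: fix $a\in A$ and show $d(a,B)=0$. Since $B$ is compact, hence closed, this forces $a\in B$, and therefore $A\subseteq B$.

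The estimate is a chain of three distances. Let $\epsilon>0$. Choose $N$ so large that $h(A_N,A)<\epsilon$ and $h(B_N,B)<\epsilon$.

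\begin{enumerate}
\item By the definition of the Hausdorff distance, $d(a,A_N)\le h(A,A_N)<\epsilon$. Since $A_N$ is compact, the infimum is attained, so there is $a_N\in A_N$ with $\|a-a_N\|<\epsilon$.
\item Because $A_N\subseteq B_N$, the point $a_N$ lies in $B_N$. Hence $d(a_N,B)\le h(B_N,B)<\epsilon$.
\item Since $z\mapsto d(z,B)$ is $1$-Lipschitz, we get
\begin{equation*}
d(a,B)\le \|a-a_N\|+d(a_N,B)<2\epsilon .
\end{equation*}
\end{enumerate}

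Since $\epsilon>0$ was arbitrary, $d(a,B)=0$, which gives $a\in B$.

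None of these steps is a genuine obstacle. The points that need care are the two just used: compactness of $A_N$ (or an extra $\epsilon$ if we prefer not to use attainment) produces $a_N$, and closedness of $B$ turns $d(a,B)=0$ into $a\in B$. Note that only the inequality $\sup_{a\in A}d(a,A_N)\le h(A,A_N)$ and its analogue for $B_N$ are used, so the argument works with half of the Hausdorff metric on each side.
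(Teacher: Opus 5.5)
Your proof is correct and takes essentially the same route as the paper. Both arguments fix $a\in A$ and use $d(a,A_n)\to 0$ together with $A_n\subseteq B_n$ to get close to $B_n$. Both then pass to $B$ via $\sup_{b\in B_n}d(b,B)\to 0$ and the triangle inequality, conclude $d(a,B)=0$, and get $a\in B$ because $B$ is closed. The only cosmetic difference is that you pick an explicit point $a_N\in A_N$, whereas the paper works directly with the inequality $d(a,B_n)\le d(a,A_n)$.
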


\begin{proof}
    Let $a\in A$. Since $A_n\to A$, we have that $d(a,A_n)\to 0$. Hence, using that $A_n \subseteq B_n$, we deduce that $d(a,B_n)\to 0$. Let $\varepsilon>0$, because $B_n \to B$, we can choose $n_0 \in\N$ such that for every $n\geq n_0$

    \begin{equation*}
        d(a,B_n)\leq\frac{\varepsilon}{2} \text{ and } \sup_{b\in B_n} d(b,B)\leq \frac{\varepsilon}{2}.
    \end{equation*}

    Consequently, $d(a,B)\le d(a,B_n)+ \displaystyle\sup_{b\in B_n} d(b,B)\leq \varepsilon$.

    Since $\varepsilon>0$ was arbitrary, it follows that $d(a,B)=0$. Since $B$ is closed, $a\in B$.
\end{proof}


\section{Tiling topology}\label{sec:tiling_topology}

\begin{definition}

We define the metric $d_1$ on a full tiling space $X_\mathcal{T}$ made up from prototiles as

\begin{equation}\label{def_metrica}
    d_1 (x,y) = \min\left(\dfrac{1}{\sqrt{2}},\tilde{d}_1(x,y)\right), 
\end{equation}

where $\tilde{d}_1(x,y)$ is defined as
$$\inf\left\{ 0 < r < \dfrac{1}{\sqrt{2}} : \exists x' \in x[[B_{\frac{1}{r}}]], \exists y' \in y[[B_{\frac{1}{r}}]], \exists v \in B_r, \text{ such that } T_v x' = y' \right\}.$$
\end{definition}
  The idea is that two tilings are close if, after a small translation, they agree on a large ball around the origin. We refer to $d_1$ as the tiling metric

We hereby demonstrate that $d_1$ is a metric.

\begin{proposition}
    $d_1$ is a metric on $X_\mathcal{T}$.
\end{proposition}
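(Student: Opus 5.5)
We verify the three metric axioms in turn: non-negativity with symmetry, identity of indiscernibles, and the triangle inequality. Non-negativity is immediate, since $d_1$ is a minimum of $1/\sqrt2$ and an infimum of positive numbers. For symmetry, suppose $r$ is admissible for $(x,y)$, witnessed by $x'\in x[[B_{1/r}]]$, $y'\in y[[B_{1/r}]]$ and $v\in B_r$ with $T_vx'=y'$. Then $T_{-v}y'=x'$ and $-v\in B_r$, so $r$ is admissible for $(y,x)$. Hence the two admissible sets coincide and $d_1(x,y)=d_1(y,x)$. If $x=y$, every $r\in(0,1/\sqrt2)$ is admissible: take $x'=y'=x[B_{1/r}]$, which is a patch covering $B_{1/r}$ by Proposition~\ref{prop:smallest_patch}, together with $v=0$. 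Therefore $d_1(x,x)=0$.

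The converse is the first real step. Suppose $d_1(x,y)=0$. Then there are $r_n\to0$ with patches $x_n'\in x[[B_{1/r_n}]]$, $y_n'\in y[[B_{1/r_n}]]$ and vectors $v_n\in B_{r_n}$ such that $T_{v_n}x_n'=y_n'$. Fix a tile $D\in y$ and a point $p\in\inte{D}$. Since $D\in y_n'$ for large $n$ (as $y_n'$ contains $y[B_{1/r_n}]$ by Proposition~\ref{prop:smallest_patch}), we get $D+v_n\in x$ for large $n$. The tiles $D+v_n$ all lie in $x$ and meet a fixed bounded set, so by the finiteness argument in the first proposition only finitely many distinct ones occur. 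Some tile $E\in x$ therefore equals $D+v_n$ for infinitely many $n$. Because $v_n\to0$, Corollary~\ref{cor:convergencia_traslaciones} forces $E=D$, since a compact set with $D+v_{n_k}=E$ along a subsequence gives $v_{n_k}$ constant, hence zero. Thus $y\subseteq x$, and by symmetry $x\subseteq y$.

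The triangle inequality is the main obstacle. Take $x,y,z$ and let $a=d_1(x,y)$, $b=d_1(y,z)$. If $a+b\ge1/\sqrt2$ there is nothing to prove, since $d_1\le 1/\sqrt2$. Otherwise choose admissible $r>a$ and $s>b$ close to them with $r+s<1/\sqrt2$, witnessed by $T_vx'=y'$ and $T_wy''=z''$, where $y'\in y[[B_{1/r}]]$, $y''\in y[[B_{1/s}]]$, $\|v\|\le r$ and $\|w\|\le s$. We claim $t=r+s$ is admissible for $(x,z)$. Since $1/t<\min(1/r,1/s)$, both $y'$ and $y''$ lie in $y[[B_{1/t}]]$, so $y'\cap y''\supseteq y[B_{1/t}]=:q$ by Proposition~\ref{Prop:interseccion_de_parches}. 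Set $x^*=T_{-v}q\subseteq x'$ and $z^*=T_wq\subseteq z''$. Then $T_{v+w}x^*=z^*$ and $\|v+w\|\le t$.

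The delicate point is coverage. The patch $x^*$ covers $B_{1/t}+v$, not $B_{1/t}$, and a pure sub-patch of $y$ may not cover the shifted balls. To repair this, we use $q=y[B_{R}]$ with $R=1/t+t$ instead. This requires $R\le\min(1/r,1/s)$, i.e.\ $1/(r+s)+(r+s)\le 1/\max(r,s)$. That inequality holds exactly because all radii are below $1/\sqrt2$: the function $u\mapsto1/u-u$ is decreasing, and this is the reason the cap $1/\sqrt2$ appears. With that choice, $\operatorname{supp}(x^*)\supseteq B_R-v\supseteq B_{1/t}$, and similarly for $z^*$. Both are patches, being translates of the patch $q$, and $q$ is a patch by Proposition~\ref{prop:smallest_patch}. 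Letting $r\downarrow a$ and $s\downarrow b$ then gives $d_1(x,z)\le a+b$. If the inequality $1/(r+s)+(r+s)\le1/\max(r,s)$ turns out to fail near the endpoints, we would instead settle for proving $d_1$ is a metric via a slightly weaker constant, or adjust $R$ to $\max(1/r,1/s)$ minus slack and check the estimate directly.
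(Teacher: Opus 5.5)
Your treatment of symmetry, of $d_1(x,x)=0$, and of $d_1(x,y)=0\Rightarrow x=y$ is correct. The pigeonhole argument through the finiteness of $x(K)$ is a valid substitute for the paper's argument, which shows that the tiles $D-v_n$ have overlapping interiors and hence $v_n$ is eventually constant. The gap is in the triangle inequality. The inequality $1/(r+s)+(r+s)\le 1/\max(r,s)$, on which your choice $R=1/t+t$ rests, is false. For $r\le s$ it is equivalent to $s(r+s)^2\le r$, which fails whenever $r$ is small relative to $s$; e.g.\ $r=0.01$, $s=0.5$ gives $1/0.51+0.51\approx 2.47>2$. The monotonicity of $u\mapsto 1/u-u$ does not imply it. Shrinking $R$ to the sharp value $1/t+\max(r,s)$ does not help. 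The patch $z^*=q-w$, with $\|w\|$ possibly equal to $s$, forces $\operatorname{supp}(q)\supseteq B_{1/t}+w$, so a centred ball $B_R$ would need $R\ge 1/t+s$. But the only centred ball you know $y''$ covers is $B_{1/s}$, and $1/(r+s)+s\le 1/s$ fails in the same regime. Your fallbacks (a weaker constant, or a radius near $\max(1/r,1/s)$) do not yield the triangle inequality either.

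The missing idea is that each witnessing patch of $y$ also covers a \emph{translated} ball. Since $x'$ covers $B_{1/r}$ and $y'=x'-v$, the patch $y'$ covers $B_{1/r}-v$. Since $z''$ covers $B_{1/s}$ and $y''=z''+w$, the patch $y''$ covers $B_{1/s}+w$. Put $K:=(B_{1/t}-v)\cup(B_{1/t}+w)$ and $q:=y[K]$. $K$ is a union of two closed balls whose centres are at distance $\|v+w\|\le t<2/t$, so it is compact, connected and the closure of its interior. Now check the coverings:
\begin{enumerate}
\item $B_{1/t}-v\subseteq B_{1/r}-v$, and $B_{1/t}-v\subseteq B_{1/t+r}\subseteq B_{1/s}$, the last inclusion because $1/(r+s)+r\le 1/s$ is equivalent to $s(r+s)\le 1$.
\item Symmetrically, $B_{1/t}+w\subseteq B_{1/s}+w$, and $B_{1/t}+w\subseteq B_{1/t+s}\subseteq B_{1/r}$ because $r(r+s)\le 1$.
\end{enumerate}
Both products are below $1/2$ since $r+s<1/\sqrt2$; this is where the cap is really used. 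Thus $y',y''\in y[[K]]$, so $q\subseteq y'\cap y''$ by Proposition \ref{prop:smallest_patch}. Then $x^*=q+v\subseteq x'$ covers $K+v\supseteq B_{1/t}$, $z^*=q-w\subseteq z''$ covers $K-w\supseteq B_{1/t}$, and $T_{v+w}x^*=z^*$.

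The paper's own proof, in its notation, only checks coverage for $x_0$, whose shift $\|t\|\le a$ is the smaller one, via $1/(a+b)\le 1/b-a$. It never checks that $z_0=T_sy_0$ covers $B_{1/c}$. With $y_0$ covering only $B_{1/b}$, that would need $1/(a+b)\le 1/b-b$, which fails for the same reason. So you correctly isolated the delicate point the paper passes over, but it must be resolved with the two-ball set $K$ rather than a centred ball.
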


\begin{proof}
    Let $x,y\in X_\mathcal{T}$ such that $d_1(x,y)=0$, we will prove that $x=y$. Let $D\in x$ be a tile, since $D$ is compact, consider $r>0$ such that $D\subseteq B_{r}$. By definition of  $d_1(x,y)=0$, we can consider a sequence of positive numbers $(r_n)_{n\in\N}$ with $r_n\to0$ such that 
    \begin{equation*}
        \forall n\in\N, \exists x_n' \in x[[B_{\frac{1}{r_n}}]], \exists y'_n \in y[[B_{\frac{1}{r_n}}]], \exists v_n\in B_{r_n}, \text{ such that } T_{v_n} x' = y' 
    \end{equation*}

    Since $r_n\to 0$, then there is $n_0$ such that for every $n\geq n_0$  we will have $\frac{1}{r_n}>r$, so that $D\in x'_n$. Thus $D-v_n \in y$ for every $n\geq n_0$. Let $d\in \operatorname{int}(D)$ and $\delta>0$ such that $B(d,\delta)\subseteq \operatorname{int}(D)$. Consider $m_0\geq n_0$ such that if $n\geq m_0$, then $r_n<\frac{\delta}{2}$. It follows that $$\forall n,m\geq m_0,\;d\in \operatorname{int}(D-v_{n})\cap\operatorname{int}(D-v_{m}).$$ But since $y$ is a tiling, the interiors of the tiles of $y$ are pairwise disjoint, consequently, for every $n,m\geq m_0$ we have that $v_n =v_m$. Therefore, $v_n$ is an eventually constant sequence that converges to $0$, so it is eventually $0$. As a result, $D\in y$ and $x\subseteq y$. Interchaging the roles of $x$ and $y$ we see that $y\subseteq x$ and we conclude that $x=y$.\\

    For the triangle inequality: let $0 < d_1(x,y) = a' \leq d_1(y,z) = b'$ with $a'+b' < 1/ \sqrt{2}$. Let $0< \epsilon < 1/ \sqrt{2} - (a'+ b')$ and define $a := a' + \epsilon/2$, $b := b' + \epsilon/2$.\\
    
    By definition of $d_1$, there exist $x' \in x[[B_\frac{1}{a}]]$, $y' \in y[[B_\frac{1}{a}]]$, $y'' \in y[[B_\frac{1}{b}]]$, and $z'' \in z[[B_\frac{1}{b}]]$, as well as $t, s \in \R^d$ with $||t|| \leq a$ and $||s|| \leq b$, such that $T_t x' = y'$ and $T_{-s} z'' = y''$.\\

    Take $y_0$ to be a patch that covers the ball $B_{\frac{1}{b}}$ and is contained in $y' \cap y''$ (see Proposition \ref{Prop:interseccion_de_parches}), $x_0 := T_{-t} y_0 \subseteq x'$, and $z_0 := T_s y_0 \subseteq z'$. Then,

    \begin{equation} \label{eq:demo_des_triangular}
        T_{-(t+s)} z_0 = x_0 \text{ where } ||t + s|| \leq a+b
    \end{equation}

    Setting $c = a + b$, since $0<a\leq b < 1/ \sqrt{2}$,
    $$
    0 \leq \dfrac{1}{c} = \dfrac{1}{a+b} \leq \dfrac{1}{b} - a,
    $$

    it follows that $B_{\frac{1}{c}} \subseteq (B_{\frac{1}{b}} + t)$. Now $y',y'' \in y[[B_{\frac{1}{b}}]]$, and therefore $x_0 \in x[[B_{\frac{1}{b}} + t]] \subseteq x[[B_{\frac{1}{c}}]]$.\\

    Combining this with (\eqref{eq:demo_des_triangular}), we get $d_1(x,z) \leq a+b = d_1(x,y) + d_1(y,z) + \epsilon$; since $\epsilon$ was arbitrary, $d_1$ satisfies the triangle inequality.

\end{proof}

    The following Theorem synthesizes the role of finite local complexity in the full tiling.
\begin{theorem} \label{theo: compacidad d1}
    $X_\mathcal{T}$ has finite local complexity under translation if and only if $X_\mathcal{T}$ with $d_1$ is a compact space.
\end{theorem}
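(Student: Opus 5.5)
We prove the two implications separately. In both directions the working principle is that $d_1$-closeness means ``agreement on a large ball after a small shift''.

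\textbf{(f.l.c. $\Rightarrow$ compact).} Since $d_1$ is a metric, it suffices to show sequential compactness. Let $(x_n)$ be a sequence in $X_\mathcal{T}$. For $k\in\N$ put $K_k:=B_k$, which is a compact connected set equal to the closure of its interior. By Proposition \ref{prop:smallest_patch}, $x_n[B_k]$ is the smallest patch covering $B_k$. Every tile of $x_n[B_k]$ meets $B_k$, so by the volume estimate in the proof that $x(K)$ is a patch, $|x_n[B_k]|\le N_k$ with $N_k$ independent of $n$. Proposition \ref{prop: flc} then says $x_n[B_k]$ belongs to one of finitely many protopatches. Each such patch is $P+t_n$ for a fixed representative $P$, and $t_n$ ranges in a bounded set because the patch meets $B_k$.

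Now run a diagonal argument. For $k=1$, pass to a subsequence along which the protopatch of $x_n[B_1]$ is constant, say $P_1+t^{(1)}_n$, with $t^{(1)}_n\to t^{(1)}$. For $k=2$, refine the subsequence so that $x_n[B_2]=P_2+t^{(2)}_n$ with $t^{(2)}_n\to t^{(2)}$. Since $x_n[B_1]\subseteq x_n[B_2]$ and the protopatches are constant, the offset $t^{(2)}_n-t^{(1)}_n$ is constant along the subsequence. Hence the limit patches are nested: $Q_1:=P_1+t^{(1)}\subseteq Q_2:=P_2+t^{(2)}$. Define $x:=\bigcup_k Q_k$. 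We must check that $x$ is a tiling.
\begin{itemize}
\item Interiors are disjoint, since each $Q_k$ is a translate of a genuine patch.
\item $x$ covers $\R^d$. By Proposition \ref{prop:contencion_convergencia hausdorff} applied to $B_k\subseteq \operatorname{supp}(P_k+t^{(k)}_n)$ (using Corollary \ref{cor:convergencia_traslaciones}), we get $B_k\subseteq\operatorname{supp}(Q_k)$.
\end{itemize}
Finally, the diagonal subsequence $x_{n_k}$ satisfies $T_{v_k}$-agreement with $x$ on $B_k$, where $v_k=t^{(k)}-t^{(k)}_{n_k}\to0$. Thus $d_1(x_{n_k},x)\to0$.

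The main obstacle is the consistency bookkeeping in the diagonal step: showing the limits $Q_k$ are genuinely nested. This requires verifying that the identification of $x_n[B_1]$ inside $x_n[B_2]$ is by a fixed sub-patch of $P_2$. Since $P_2$ has finitely many sub-patches, we may refine the subsequence further to fix which sub-patch it is.

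\textbf{(compact $\Rightarrow$ f.l.c.).} Argue by contraposition. First, if $\mathcal{T}$ is finite but $\mathcal{T}^{(2)}$ is infinite, choose pairwise inequivalent adjacent pairs $\{D_k,A_k\}$ lying in tilings $y_k$. Passing to a subsequence, we may assume $A_k\sim A$ for a fixed prototile $A$. Translate each $y_k$ so that $A_k=A$ sits at a fixed position near the origin, obtaining $x_k$. If a subsequence converged to some $x$, then for large $k,l$ the patches $x_k[B_R]$ and $x_l[B_R]$ would be small translates of the same patch of $x$, where $R$ exceeds twice the maximal diameter of the tiles. Because $A$ is at the same place in both, the translation vector is forced to be $0$ by the same interior-disjointness argument used to prove $d_1(x,y)=0\Rightarrow x=y$. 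This would make $\{D_k,A\}$ and $\{D_l,A\}$ equal, contradicting that the pairs are inequivalent. If instead $\mathcal{T}$ is infinite, the same argument applies using single tiles containing the origin, and the contradiction comes from infinitely many inequivalent prototiles.

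The delicate point in this direction is rigidity: showing that the small translation in the metric must vanish.
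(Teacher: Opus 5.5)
Your forward direction follows the paper's argument: bounded cardinality of $x_n[B_k]$, finitely many protopatches by Proposition \ref{prop: flc}, convergence of the translation vectors, a diagonal subsequence, and Proposition \ref{prop:contencion_convergencia hausdorff} for the covering. Your extra refinement, which fixes the position of $P_{k-1}$ inside $P_k$, is a genuine improvement. The paper simply asserts the nesting $p^{n-1}\subseteq p^n$ in condition (3) without argument.

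The refinement also justifies a step you state without comment, namely $v_k\to 0$ at the end. Once the offsets $t^{(k)}_n-t^{(k-1)}_n$ are constant along the $k$-th subsequence, you get $t^{(k)}_n-t^{(k)}=t^{(1)}_n-t^{(1)}$ there. Hence $v_k=t^{(1)}-t^{(1)}_{n_k}\to 0$. Without this observation you would need the paper's device instead: fix a level $j\ge 1/\varepsilon$ and use $x_{n_k}[B_j]$ for $k\ge j$. You should say explicitly which of the two you rely on.

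For the converse you take a different route from the paper. The paper argues directly from total boundedness. It takes a finite $r$-net $x_1,\dots,x_n$ with $r$ small compared with $R$. Every two-tile patch, translated to contain the origin, then reappears up to a shift of size $<r$ inside one of the finitely many finite patches $x_i[B_{5R}]$. So $\mathcal{T}^{(2)}$ is finite, with no normalization and no rigidity needed. You instead exhibit a sequence with no convergent subsequence, which costs you the normalization on a common tile $A$ and the rigidity step $v_k=v_l$. Your version needs only sequential compactness; the paper's gives an explicit finite reservoir and is shorter.

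The rigidity step itself is fine, but the conclusion you draw from it is overstated. The equality $x_k[B_R]=x_l[B_R]$ only says that $D_k$ and $D_l$ are neighbours of $A$ in one and the same finite patch. It does not give $D_k=D_l$, since $A$ typically has several neighbours. The contradiction needs one more line of pigeonhole: for all large $k$ in the subsequence, the pairwise distinct tiles $D_k$ would all lie in the single finite patch $x_{k_0}[B_R]$, which is impossible.

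In fact the rigidity you call delicate is dispensable. The translated pairs $\{D_k,A\}-v_k$ are two-tile subsets of the finite patch $x[B_{R+1}]$, so two of them coincide for some $k\neq l$. The original pairs $\{D_k,A\}$ and $\{D_l,A\}$ are then equivalent, which is exactly the paper's finite-reservoir mechanism in sequential form.

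Finally, your separate case ``$\mathcal{T}$ infinite'' never arises here. Full tiling spaces are defined with $\mathcal{T}$ finite.
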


In order to prove this, we require the following lemma.

\begin{lemma}[Selection Lemma]

Given a sequence $D_{n}$ of tiles, all sharing a common point $p$ and each congruent (resp. equivalent) to a same tile $D'$, there exists a subsequence $D_{n_k}$ converging to a tile congruent (resp. equivalent) to $D'$ such that $p\in D'$.
    
\end{lemma}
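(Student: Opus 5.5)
Write each $D_n = M_n(D') + t_n$ with $M_n\in SO(d)$ and $t_n\in\R^d$; in the translation case take $M_n=I$ for all $n$. The plan is to show that the parameters $(M_n,t_n)$ range over a compact set, pass to a convergent subsequence, and use the continuity results of Section \ref{sec:hausdorff} to identify the limit. The conclusion "$p\in D'$" in the statement should be read as $p$ lying in the limit tile $D:=M(D')+t$, and that is what I will prove.

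\textbf{Boundedness.} Since $SO(d)$ is compact, only the translations need attention. Fix any $a\in D'$ and let $R:=\operatorname{diam}(D')$. Because $p\in D_n$, we have $p=M_n(b_n)+t_n$ for some $b_n\in D'$. Hence
\[
\|t_n\| \le \|p\|+\|M_n(b_n)\| \le \|p\| + C\bigl(\|a\|+R\bigr),
\]
where $C$ bounds the operator norms of matrices in $SO(d)$ with respect to the chosen norm; $C$ is finite by compactness of $SO(d)$. So $(t_n)$ is bounded. By Bolzano--Weierstrass and compactness of $SO(d)$, there is a subsequence with $M_{n_k}\to M\in SO(d)$ and $t_{n_k}\to t$.

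\textbf{Convergence.} Set $D:=M(D')+t$. By the triangle inequality for $h$,
\[
h(D_{n_k},D)\le h\bigl(M_{n_k}(D')+t_{n_k},\,M_{n_k}(D')+t\bigr)+h\bigl(M_{n_k}(D')+t,\,M(D')+t\bigr).
\]
The first term equals $\|t_{n_k}-t\|$ by Proposition \ref{prop:distancia_traslaciones}. The second equals $h(M_{n_k}(D'),M(D'))$, since translating both sets by $t$ does not change the Hausdorff distance, and this tends to $0$ by Proposition \ref{prop:convergencia_rot}. So $D_{n_k}\to D$, and $D$ is congruent (resp. equivalent) to $D'$ by construction. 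It is a tile because an isometry preserves compactness and the property of being the closure of its interior.

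\textbf{The point $p$.} Apply Proposition \ref{prop:contencion_convergencia hausdorff} with $A_k=\{p\}$ and $B_k=D_{n_k}$. The constant sequence $\{p\}$ converges to $\{p\}$, so $\{p\}\subseteq D$, that is, $p\in D$.

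I expect the only real obstacle to be the boundedness of $(t_n)$: it needs a uniform bound on $\|M_n\|$ for the possibly non-Euclidean norm, which is exactly where compactness of $SO(d)$ enters. Everything after that follows directly from the cited propositions.
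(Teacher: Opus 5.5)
Your proof is correct and follows essentially the same route as the paper: compactness of $SO(d)$ together with boundedness of the translations (forced by $p\in D_n$), extraction of a convergent subsequence of parameters, and Propositions \ref{prop:distancia_traslaciones} and \ref{prop:convergencia_rot} for the Hausdorff convergence; your reading of the conclusion as $p\in D$ matches what the paper's proof actually establishes. The differences are minor. The paper first extracts convergent preimages $d_{n_k}\to\bar d$ of $p$ in $D'$ and gets $p\in D$ by passing to the limit in $A_{n_k}d_{n_k}+t_{n_k}=p$. You skip that step and obtain $p\in D$ from Proposition \ref{prop:contencion_convergencia hausdorff} applied to $\{p\}\subseteq D_{n_k}$, and you also make explicit the triangle-inequality combination of the two convergence results, which the paper leaves implicit.
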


\begin{proof}
    Let $d_n \in D'$ be the point corresponding to $p$ for each tile of the sequence. By compactness of $D'$, there exists a subsequence $d_{n_k}$ converging to some $\bar{d}\in D'$. We translate $D'$ so that $\bar{d}$ coincides with $0\in\R^d$; that is, each tile $D_{n_k}$ is obtained by rotating $D'$ by a rotation matrix in $SO(d)$ 
    and then translating it by a vector $t_{n_k}$.\\

    Since all tiles of the subsequence contain $p$, it follows that the sequence $t_{n_k}$ is bounded, so it has a subsequence converging to some $t\in \R^d$, which we continue to denote $t_{n_k}$. Since $SO(d)$ is compact, the sequence $A_{n_k}$ of rotation matrices corresponding to each $D_{n_k}$ has a subsequence converging to some $A\in SO(d)$, which we continue to denote $A_{n_k}$.\\

    This implies that the subsequence $D_{n_k}$ converges to a tile $D$ obtained by rotating $D'$ by a matrix $A\in SO(d)$ around $\bar{d}$ and translating by a vector $t$ (Proposition \ref{prop:convergencia_rot} and Corollary \ref{cor:convergencia_traslaciones}), which is congruent to $D'$. Note that $D$ contains $p$, since the sequence that rotates $d_{n_k}$ by  $A_{n_k}$ and translates it by $t_{n_k}$ is constantly equal to $p$, and converges to $\bar{d}$ translated by $t$ (since the rotation is around $\bar{d}$).
\end{proof}

    Let us prove Theorem \ref{theo: compacidad d1}.
    
    \begin{proof}
   Assume that a full tiling space $X_\mathcal{T}$ has finite local complexity under translation. Let $(x_i)_{i\geq 1}$ be sequence of tilings. Consider

   \begin{equation*}
       R:=2\max\{\operatorname{diam}(T): T \in\mathcal{T}\}.
   \end{equation*}

    Now we will inductively construct a convergent subsequence. where the induction  over $n\geq 1$ is such that:

    \begin{enumerate}
        \item $(x_{i}^{n})_{i\geq1} $ is a subsequence of $(x_{i}^{n-1})_{i\geq1}$, where we define $x_{i}^{0} := x_{i}$.
        \item for each $i\geq 1$ there is a patch $p_{i}^{n}$ of $x_{i}^{n}$ that covers the ball $B_{nR}$ such that  $\operatorname{supp}(p_{i}^{n}) \subseteq B_{(n+1)R}$, and the sequence $(p_{i}^{n})_{i\geq 1}$ is made of equivalent patches and converges with the Hausdorff metric to a patch $p^n$ that covers $B_{nR}$ that belongs to the same protopatch as the sequence.
        \item $p^{n-1}\subseteq p^n$, where $p^0:=\emptyset$. Moreover, $\operatorname{supp}(p^n) \subseteq B_{(n+1)R}$. 
    \end{enumerate}

    We start with $n=1$.  for each $i\geq 1$, let $q_{i}^{1} = x_{i}[B_R]$. Observe that the sequence of the cardinalities of the patches $|q_{i}^{1}|$ is bounded above, because if $T\in q_{i}^{1}$ for some $i$, it follows that $T\subseteq B_{2R}$. Hence
    
    \begin{equation*}
            \forall i\geq 1, \operatorname{supp}(q_{i}^{1})\subseteq B_{2R}.
    \end{equation*}

   On the other hand, since there is a finite number of prototiles, and, by definition of tile, every tile has nonempty interior, there is $r>0$ such that every $T\in\mathcal{T}$ contains a ball of radius $r$ on its interior. Thus, it follows that
    \begin{equation*}
        \forall i\geq 1, \lambda(B_r)|q_{i}^{1}|\leq\lambda(B_{2R}).
    \end{equation*}
     Consequently, the sequence $|q_{i}^{1}|$ is bounded above.

     Thence, there exists a subsequence of $|q_{i}^{1}|$ that is constant. We continue to denote this subsequence by $|q_{i}^{1}|$ to avoid overburdening the notation.

    Therefore, $q_{i}^{1}$ is a sequence of patches with the same number of tiles such that every patch of the sequence covers $B_{R}$. Say they have $k$ tiles. Since we have finite local complexity under translation, we know that $\mathcal{T}^{(k)}$ is finite. It follows that there exists a subsequence $q_{\phi(i)}^{1}$ such that every patch of the subsequence belongs to the same protopatch $[q'] \in \mathcal{T}^{(k)}$. We now have a sequence $q_{\phi(i)}^{1}$ of patches, each covering $B_{R}$ with its support contained in $B_{2R}$; then, by an argument identical to that of the selection Lemma (applied to the sequence of the supports of the patches), up to passing to a subsequence, $\operatorname{supp}(q_{\phi(i)}^{1})$ converges to a set that is the support of a patch $q$ that belongs to the protopatch $[q']$, such that $q$ also covers $B_R$ and its support is also contained in $B_{2R}$ (by Proposition \ref{prop:contencion_convergencia hausdorff}). So choose 
    \begin{equation*}
        x^{1}_i:=x_{\phi(i)},\; p_{1}^{n} := q_{\phi(i)}^{1},\; p^1 :=q.
    \end{equation*}

    We now assume the inductive hypotheses for $n-1$, with $n\geq2$. For each $i\geq 1$, let $q_{i}^{n} = x^{n-1}_{i}[B_{nR}]$. Again, the sequence $|q_{i}^{n}|$ is bounded above (same reason as the base case), therefore it follows that there exists a constant subsequence of $|q_{i}^{n}|$. So, as the base case, there is a subsequence $q_{\phi(i)}^{n}$ such that each term belongs to the same protopatch. We now have a sequence $q_{\phi(i)}^{n}$ of patches, each covering $B_{nR}$ with its support contained in $B_{(n+1)R}$. Then, by an argument identical to that of the selection Lemma (applied to the sequence of the supports), after passing to a subsequence, $\operatorname{supp}(q_{\phi(i)}^{n})$ converges to a set that is the support of a patch $q^n$ that belongs to the same protopatch as the sequence, with $q^n$ also covering $B_{nR}$ and with its support also contained in $B_{(n+1)R}$ (by Proposition \ref{prop:contencion_convergencia hausdorff}), so it suffices to put
    \begin{equation*}
        x^n_{i} := x^{n-1}_{\phi(i)},\; p_{i}^n := q_{\phi(i)}^n,\; p^n:= q^n.
    \end{equation*}

    Consequently, condition $(1)$, $(2)$ and $(3)$ are satisfied.

    Now we will define a subsequence diagonally. Let $(y_n)_{n\geq 1}$ be the subsequence of $(x_n)_{n\geq 1}$ given by $y_n := x_n^{n}$ and  consider $y:=\displaystyle\bigcup_{n\in\N} p_n$.  Since $(p_n)_{n\geq1}$ is an increasing sequence of patches, it is clear that the interior of the tiles in $y$ are pairwise disjoint, and using that $$\R^d=\bigcup_{n\geq1}B_{nR}\subseteq\bigcup_{n\geq 1}\operatorname{supp}(p_n),$$ it follows that $y\in X_\mathcal{T}$, i.e, $y$ is a tiling.

    We will finish by showing that $y_n\to y$. Let $\varepsilon>0$. Consider $j_0$ such that $j_0 R \geq \frac{1}{\varepsilon}$. Let $i_0 \in\N$ be such that if $i\geq i_0$, then there $t_i \in\R^d$ with $||t_i||\leq\varepsilon$ such that $T_{t_i}(p_{i}^{j_0})=p^{j_0}$. Defining $n_0 :=\max\{i_0 , j_0\}$, we deduce that for every $n\geq n_0$, $d_1(y_n,y)\leq \varepsilon$. Note that what we have just stated holds true; although the superscript is not fixed (is not $j_0$ but $n\geq n_0$), we know from the inductive construction that for each $n\geq j_0$, $p_{n}^n$ contains the patch $p_{n}^{j_0}$. Compactness is concluded.

    For the reverse implication, assume that $d_1$ makes $X_\mathcal{T}$ a compact metric space. Consider $r \in (0,\frac{\sqrt{2}}{2})$ such that $r<\min\{\frac{1}{3R},R\}$, and consider $x_1 ,...,x_n \in X_\mathcal{T}$ such that 
    \begin{equation}\label{theo:comp_d1_uso_compacidad}
          X_\mathcal{T} = \bigcup_{i=1}^{n}B(x_i,r).
    \end{equation} 

   Now, consider a patch $x'$ of two tiles. The patch $x'$ belongs to a tiling $x\in X_{\mathcal{T}}$. Let $v$ be a point of $\R^d$ that lies in the intersections of the two tiles of $x'$. Consider the tiling $T_vx$, hence the patch $T_vx'$, which belongs to the same protopatch as $x'$, appears in $T_vx$. Observe that $\operatorname{supp}(T_vx') \subseteq B_R$. Using \eqref{theo:comp_d1_uso_compacidad}, we obtain that 

   \begin{equation*}
       \exists i\in\{1,...,n\}, \exists y'\in x_{i}[[B_{3R}]], \exists t\in B_R,\exists z'\subseteq y', T_tz' = T_vx'.
   \end{equation*}

   Thence, $\operatorname{supp}(z')$ must intersect $B_{3R}$, if not, every $w\in\operatorname{supp}(z')$ satisfies $$||w -t||>||w||-||t||>3R-R=2R,$$

   Which contradicts the fact that  $w-t \in \operatorname{supp}(T_vx')\subseteq B_R$. Consequently, one tile of $z'$ must intersect $B_{3R}$, hence $z'\subseteq x_i[B_{5R}]$. That is, since $x'$ was arbitrary, every patch of two tiles is equivalent to some patch of size two whose tiles are in $x_i[B_{5R}]$ for some $i\in\{1,...,n\}$ (finitely many possibilities), thus $\mathcal{T}^{(2)}$ is finite. 



\end{proof}

\begin{proposition}\label{prop:continuidad_accion}

Let $X_{\mathcal{T}}$ be a full tiling space. The action by translation is continuous, that is, the map $T\colon\R^d\times X_{\mathcal{T}}\to X_\mathcal{T} $ given by $T(v,x):=T_vx$ is continuous.
    
\end{proposition}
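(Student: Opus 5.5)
The plan is to prove continuity at an arbitrary point $(v,x)\in\R^d\times X_{\mathcal{T}}$ by showing directly from the definition of $d_1$ that $T_wy$ is close to $T_vx$ whenever $w$ is close to $v$ and $y$ is close to $x$. Fix $\varepsilon\in(0,\frac{1}{\sqrt2})$. We want $\delta>0$ such that $\|w-v\|<\delta$ and $d_1(y,x)<\delta$ imply $d_1(T_wy,T_vx)\le\varepsilon$. By the triangle inequality for $d_1$ (proved above), it suffices to bound two pieces separately:
\[
d_1(T_wy,T_vx)\le d_1(T_wy,T_vy)+d_1(T_vy,T_vx).
\]

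\textbf{First piece.} We show $d_1(T_wz,T_vz)\le\|w-v\|$ for every tiling $z$, whenever $\|w-v\|<\frac{1}{\sqrt2}$. Put $u=w-v$, so that $T_wz=T_u(T_vz)$ and the action is additive. For any $r$ with $\|u\|\le r<\frac{1}{\sqrt2}$, take $z':=(T_vz)[B_{1/r+1}]$, which is a patch covering $B_{1/r}$ by Proposition \ref{prop:smallest_patch}. The translate $T_uz'$ is then a patch of $T_wz$ whose support contains $B_{1/r+1}-u\supseteq B_{1/r}$, because $\|u\|<1$. Hence $r$ belongs to the set in the definition of $\tilde d_1$, and $d_1(T_wz,T_vz)\le\|u\|$.

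\textbf{Second piece.} We must control $d_1(T_vy,T_vx)$ in terms of $d_1(y,x)$. Suppose $d_1(y,x)<s$ with $s$ small. Then there are patches $x'\in x[[B_{1/s}]]$, $y'\in y[[B_{1/s}]]$ and a vector $t\in B_s$ with $T_tx'=y'$. Translating both patches by $-v$ gives $T_t(T_vx')=T_vy'$, and the supports of these patches contain $B_{1/s}-v$. If $1/s\ge 1/s'+\|v\|$, then $B_{1/s}-v\supseteq B_{1/s'}$, and therefore $d_1(T_vy,T_vx)\le \max(s,s')$. Choosing $s$ so that $\frac{1}{s}\ge \frac{2}{\varepsilon}+\|v\|$ and $s\le\varepsilon/2$ yields $d_1(T_vy,T_vx)\le\varepsilon/2$.

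\textbf{Conclusion.} Take $\delta=\min(\varepsilon/2,s)$. Combining the two pieces gives $d_1(T_wy,T_vx)\le\varepsilon$, which proves joint continuity. Since the metrics are first countable, the same argument could equivalently be phrased with sequences $(v_n,x_n)\to(v,x)$.

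The main obstacle is the second piece. The metric is centered at the origin, so translating by $v$ moves the large agreeing ball off-center, and the loss depends on $\|v\|$. This is why the continuity is only uniform on bounded sets of $v$, and the radius bookkeeping must account for this dependence. A secondary point is making sure the translated patches are genuinely patches of $T_vx$ and $T_vy$ covering the required ball. Connectedness and finiteness are preserved by translation, and coverage follows from the inclusion $B_{1/s}-v\supseteq B_{1/s'}$.
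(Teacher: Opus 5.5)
Your proof is correct. Its radius bookkeeping matches the paper's: your threshold $1/s\ge 2/\varepsilon+\|v\|$ is exactly the paper's choice $d_1(x_n,x)\le 1/(2/\varepsilon+\|t\|)$, and both rest on the inclusion $B_{1/s}-v\supseteq B_{1/s'}$.

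The difference is in how the estimate is organized. The paper works with sequences $t_n\to t$, $x_n\to x$ and never splits the estimate. It takes the witnessing patches $x_n'$, $x'$ and the vector $v$ for $d_1(x_n,x)$, and moves them to $T_{t_n}x_n'$ and $T_tx'$. It then exhibits a single composite vector $v-t_n+t$, of norm at most $\varepsilon$, carrying one patch onto the other. The triangle inequality for $d_1$ is never used in that estimate.

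You instead pass through the intermediate tiling $T_vy$ and use the triangle inequality to separate a displacement term from a fixed-translation term. This makes your argument depend on the triangle inequality, which is the more delicate part of the paper's metric verification (it is where the cap $1/\sqrt2$ is needed). In return you get sharper, uniform information:
\begin{itemize}
\item $d_1(T_wz,T_vz)\le\|w-v\|$ for every $z$ (when $\|w-v\|<1/\sqrt2$), so the orbit maps are $1$-Lipschitz uniformly in $z$;
\item each $T_v$ is uniformly continuous on $X_{\mathcal T}$, with a modulus depending only on $\|v\|$.
\end{itemize}
The paper's one-pass argument is shorter and self-contained, but it leaves these uniformity statements implicit.
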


\begin{proof}
     Let $t\in\R^d$, let $(t_n)_{n\in\N}$ a sequence in $\R^d$ such that $t_n\to t$ and let $(x_n)_{n\in\N}$ be a sequence in $X_\mathcal{T}$ such that $x_n$ converges to $x\in X_{\mathcal{T}}$. Let $\varepsilon>0$, assume that $\varepsilon< \frac{\sqrt{2}}{2}$. We need to find $n_0\in\N$ such that
     \begin{equation}\label{convergencia_tilings_trasladados}
       \forall n\geq n_0, d_1(T_{t_n}x_n, T_tx)<\varepsilon.
   \end{equation}

   consider $m_0$ such that 
   \begin{equation*}
        \forall n\geq m_0, t_n \in B(t,\min\{\frac{1}{\varepsilon},\frac{\varepsilon}{2}\}).
    \end{equation*}
   On the other hand, consider $k_0\in\N$ such that
    \begin{equation}\label{convergencia_tilings}
         \forall n\geq k_0, d_1(x_n,x)\leq\frac{1}{\frac{2}{\varepsilon}+||t||}.
    \end{equation}

   It is enough to consider $n_0:=\max\{k_0,m_0\}$. Indeed, let $n\geq n_0$, hence, by \eqref{convergencia_tilings}, there is $x'_n\in x_n[[B_{\frac{2}{\varepsilon}+||t||}]]$ and there is $x'\in x[[B_{\frac{2}{\varepsilon}+||t||}]]$ and a vector $v\in\R^d $ with $||v||\leq \frac{1}{\frac{2}{\varepsilon}+||t||}$ such that $T_vx'_n=x'$.\\ Observe that, since $t_n \in B(t,\frac{1}{\varepsilon})$, we have that $B(0,\frac{1}{\varepsilon})\subseteq B(0,\frac{2}{\varepsilon}+||t||)-t_n$, so $T_{t_n}x'_n\in T_{t_n}x_n[[B_{\frac{1}{\varepsilon}}]]$. We also have that $T_tx'\in T_tx[[B_\frac{1}{\varepsilon}]]$. Note that 
   \begin{equation*}
    \begin{aligned}
        ||v-t_n +t||&\leq ||v||+||t-t_n||\\
        &\leq \frac{1}{\frac{2}{\varepsilon}+||t||} +\frac{\varepsilon}{2}\\
        &\leq \varepsilon.
    \end{aligned}
   \end{equation*}
   Thence, using that $T_{v-t_n +t}T_{t_n}x'_n=T_t T_vx'_n=T_tx'$, we conclude \eqref{convergencia_tilings_trasladados}.
\end{proof}

\begin{remark}
For a full tiling made up of congruence prototiles, We define a metric $d_2$. This metric makes two tilings close if, after an isometry, they agree on a large ball centered at the origin.

We first define a metric for the group of direct isometries of $\R^d$,

$$
 \mathcal{E}^d := \{ Tp = Ap + b : A \in SO(d), b \in \R^d \},
$$

given by

$$
d_{\mathcal{E}^d} (A_1 p + b_1 , A_2 p + b_2) := \max \{ ||A_1 - A_2|| , ||b_1 - b_2|| \}.
$$

 Then, denoting

$$
B_r (\mathcal{E}^d) = \{ T \in \mathcal{E}^d : d_{\mathcal{E}^d} (T,Id) < r \} = \{ Tp = Ap + b : ||A-Id|| < r, ||b|| < r \}.
$$

    We define a metric on the full tiling space $X_\mathcal{T}$ as

    $$
    d_2(x,y) = \min\left(\dfrac{1}{\sqrt{2}},\tilde{d}_2(x,y) \right),
    $$
    where $\tilde{d}_2(x,y)$ is defined as

    $$\inf  \left\{ 0<r<\dfrac{1}{\sqrt{2}} : \exists x' \in x[[B_\frac{1}{r}]], y' \in y[[B_\frac{1}{r}]], T \in B_r(\mathcal{E}^d), \text{ such that }  Tx' = y' \right\}.$$

The proof that this function defines a metric is similar to the used for $d_1$.

Analogously, a full tiling space has finite local complexity under the Euclidean group if and only if is compact with $d_2$. Also, the action of $\mathcal{E}^d$ is continuous. The proofs follow the same scheme as the ones used for a full tiling made up of prototiles.
\end{remark}

\section{Tilings as topological dynamical systems}

\subsection{Abstract topological dynamical systems}
\begin{definition}
A \emph{topological dynamical system} is a triple $(X,G,T)$, where
$G$ is a topological group, $X$ is a topological space, and
\[
T \colon G \times X \longrightarrow X
\]
is a continuous action of $G$ on $X$. That is,
\[
T(e,x)=x
\qquad\text{and}\qquad
T(g,T(h,x))=T(gh,x)
\]
for every $g,h\in G$ and $x\in X$. If $G$ carries no topology, then $G$ will be provided by its discrete topology. For $g\in G$, $T_g$ denotes the map from $X$ to $X$ such that given $x\in X$ it returns $T(g,x)$, note that $T_g$ is a homeomorphism for any $g\in G$.

 The \emph{orbit} of a point $x\in X$ is the set
\begin{equation*}
Gx:=\{T(g,x): g\in G\},
\end{equation*}

and we may also denote $Gx$ by $\orbit{x}$. For $H\subseteq G$, we denote $Hx:=\{T(h,x):h\in H\}$. We may also denote $Gx$ by $T(x)$.

A set $Y\subseteq X$ is said to be $G$-invariant if $T_g(Y)=Y$ for every $g\in G$, or equivalently, if $T_g(Y)\subseteq Y$ for every $g\in G$.

 A \emph{subsystem} is a nonempty closed subset $Y\subseteq X$ that is $G$-invariant. In that case, the restriction
\begin{equation*}
T|_{G\times Y}G\times Y\longrightarrow Y
\end{equation*}
then defines a topological dynamical system $(G,Y,T|_{G\times Y})$. Observe that given $x \in X$, $\overline{Gx}$ is a subsystem.

The topological dynamical system $(G,X,T)$ is said to be \emph{minimal} if every orbit is dense in $X$, that is,
\begin{equation*}
\overline{Gx}=X
\qquad\text{for every }x\in X.
\end{equation*}
Equivalently, $(G,X,T)$ is minimal if it has no proper subsystem, that is, every subsystem is either empty or the whole space $X$.
\end{definition}

A well known application of Zorn's lemma is that any compact topological dynamical system possesses a minimal subsystem.

\begin{proposition}\label{prop:existencia_minimal}
    Any compact topological dynamical system $(G,X,T)$ has a minimal subsystem.
\end{proposition}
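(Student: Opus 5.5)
The plan is the standard Zorn's lemma argument on the family of subsystems. Let $\mathcal{S}$ be the collection of all subsystems of $(G,X,T)$, that is, nonempty closed $G$-invariant subsets of $X$, partially ordered by reverse inclusion: $Y_1\preceq Y_2$ if and only if $Y_2\subseteq Y_1$. First we check that $\mathcal{S}\neq\emptyset$: since $X$ is a nonempty space (implicitly, otherwise there is nothing to prove), $X$ itself is closed and $G$-invariant, so $X\in\mathcal{S}$. Alternatively, for any $x\in X$ the orbit closure $\overline{Gx}$ is a subsystem, as observed in the definition.

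Next we verify the chain condition. Let $\mathcal{C}\subseteq\mathcal{S}$ be a nonempty chain, i.e.\ totally ordered by inclusion, and put $Z:=\bigcap_{Y\in\mathcal{C}}Y$. The set $Z$ is closed as an intersection of closed sets. It is $G$-invariant because for $g\in G$ and $z\in Z$ we have $T_gz\in Y$ for every $Y\in\mathcal{C}$. The key point, and the only place compactness enters, is that $Z\neq\emptyset$. Since $\mathcal{C}$ is totally ordered, any finite subfamily $Y_1,\dots,Y_k$ has a smallest member, so $Y_1\cap\dots\cap Y_k$ equals that member and is nonempty. Hence $\mathcal{C}$ is a family of closed subsets of the compact space $X$ with the finite intersection property, and therefore $Z\neq\emptyset$. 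Thus $Z\in\mathcal{S}$ is an upper bound for $\mathcal{C}$ in the order $\preceq$.

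By Zorn's lemma, $\mathcal{S}$ has a maximal element $M$ for $\preceq$, which is a subsystem containing no strictly smaller subsystem. Finally we show that $(G,M,T|_{G\times M})$ is minimal. Let $Y\subseteq M$ be a subsystem of $M$. Then $Y$ is closed in $M$, and $M$ is closed in $X$, so $Y$ is closed in $X$. It is also nonempty and $G$-invariant, so $Y\in\mathcal{S}$ with $M\preceq Y$. Maximality then forces $Y=M$. Equivalently, for $x\in M$ the orbit closure $\overline{Gx}\subseteq M$ is a subsystem, hence equals $M$, so every orbit in $M$ is dense.

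I do not expect any real obstacle. The only delicate step is nonemptiness of the intersection of a chain, which is exactly where compactness is needed, via the finite intersection property. No Hausdorff hypothesis is required, since compactness alone gives the finite intersection property for closed sets.
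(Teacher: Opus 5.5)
Your proof is correct and is essentially the paper's argument: Zorn's lemma on the family of subsystems, with nonemptiness of the intersection of a chain coming from compactness via the finite intersection property. Using reverse inclusion and maximal elements instead of inclusion and lower bounds is purely cosmetic, and your extra checks that $\mathcal{S}\neq\emptyset$ and that a subsystem of $M$ is closed in $X$ fill in details the paper leaves implicit.
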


\begin{proof}
    Consider the set $\mathcal{S}:=\{Y\subseteq X: Y \text{ is a subsystem}\}$. $\mathcal{S}$ is partially ordered by inclusion. Let $\mathcal{C}\subseteq \mathcal{S}$ be a nonempty chain. Then, by the compactness of $X$ and the finite intersection property of $\mathcal{C}$, we deduce that $$\bigcap_{K\in\mathcal{C}}K\neq\emptyset.$$

    Also, $\bigcap_{K\in\mathcal{C}}K$ is closed and invariant, hence $\bigcap_{K\in\mathcal{C}}K\in\mathcal{S}$ and is a lower bound of $\mathcal{C}$. By Zorn's lemma, there is a minimal element of $\mathcal{S}$, and it is clear that this is a minimal subsystem.
\end{proof}

\begin{definition}
    A set $A\subseteq G$ will be called left syndetic if there is a compact set $K\subseteq G$ such that $G=AK$. A point $x\in X$ will be called almost periodic provided that for every neighborhood $U$ of $x$ there is a syndetic set $A$ such that $Ax\subseteq U$. Equivalently, a point $x$ is almost periodic if for every neighborhood $U$ of $x$ the set $R(x,U):=\{g\in G: T_g(x)\in U\}$ is left syndetic.
\end{definition}

When $X$ is a regular topological space, a necessary condition for a point $x\in X$ to be almost periodic is $\overline{Gx}$ to be minimal.
\begin{theorem}\label{theo:orbita_minimal}\cite{gottschalk1946almost}

Assume $X$ to be a regular topological space. For $x\in X$ to be almost periodic is necessary that $\overline{Gx}$ is minimal as a dynamical system. If $\overline{Gx}$ is compact, then this condition is also sufficient.

\end{theorem}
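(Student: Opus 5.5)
We prove the two implications separately. Throughout, write $Y:=\overline{Gx}$ and use regularity of $X$ to shrink neighbourhoods: given an open $U\ni y$, there is an open $V\ni y$ with $\overline{V}\subseteq U$.

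\textbf{Necessity.} Assume $x$ is almost periodic; we show $Y$ is minimal, i.e.\ $\overline{Gy}=Y$ for every $y\in Y$. Since $\overline{Gy}\subseteq Y$ and $Y=\overline{Gx}$, it suffices to show $x\in\overline{Gy}$; the reverse inclusion then follows by invariance and closedness of $\overline{Gy}$.

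Suppose instead that $x\notin\overline{Gy}$. By regularity, choose an open $U\ni x$ with $\overline{U}\cap\overline{Gy}=\emptyset$.
\begin{enumerate}
\item Apply regularity again to get an open $V\ni x$ with $\overline{V}\subseteq U$.
\item Almost periodicity gives a compact $K\subseteq G$ with $G=R(x,V)K$.
\item For every $g\in G$, write $g=ak$ with $a\in R(x,V)$ and $k\in K$. Then $T_{k^{-1}}T_gx=T_ax\in V$, so $T_gx\in T_{k^{-1}}V$ (note $T_{k^{-1}}=T_k^{-1}$, since $G$ need not be abelian). Hence $Gx\subseteq K^{-1}V$.
\item We need a closed set containing $Gx$ that misses $y$. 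The natural candidate is $K^{-1}\overline{V}$, which is closed because $K^{-1}$ is compact and the action is continuous. This uses a tube-lemma argument on $K^{-1}\times X$: if $z\notin K^{-1}\overline V$, then for each $k\in K$ the point $T_{k^{-1}}^{-1}z=T_kz\notin\overline V$, and compactness gives a uniform neighbourhood of $z$ avoiding $K^{-1}\overline V$.
\item Therefore $Y\subseteq K^{-1}\overline{V}$, so $y=T_{k^{-1}}w$ for some $k\in K$ and $w\in\overline V$.
\item Then $w=T_ky\in Gy$, while also $w\in\overline V\subseteq U$. This contradicts $\overline U\cap\overline{Gy}=\emptyset$.
\end{enumerate}

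\textbf{Sufficiency.} Assume $Y$ is compact and minimal, and let $U\ni x$ be open. By regularity, choose an open $V\ni x$ with $\overline V\subseteq U$.
\begin{enumerate}
\item By minimality, every $y\in Y$ has $x\in\overline{Gy}$, so $T_gy\in V$ for some $g$. Hence $Y\subseteq\bigcup_{g\in G}T_g^{-1}V$.
\item Compactness of $Y$ yields finitely many $g_1,\dots,g_m$ with $Y\subseteq\bigcup_i T_{g_i}^{-1}V$.
\item For any $h\in G$, the point $T_hx$ lies in $Y$, so some $i$ gives $T_{g_i}T_hx\in V\subseteq U$, i.e.\ $g_ih\in R(x,U)$.
\item Thus $h\in g_i^{-1}R(x,U)$, so $G=FR(x,U)$ with $F=\{g_1^{-1},\dots,g_m^{-1}\}$ finite, hence compact.
\end{enumerate}
This is right-sided syndeticity. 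In the abelian case relevant to tilings ($G=\R^d$) it is the same as left syndeticity. For a general group, run the argument with $T_{g}^{-1}$ replaced appropriately: cover $Y$ by the sets $T_{g}V$ with $g\in G$, which is also possible by minimality since $G=G^{-1}$. Then for each $h$ there is $i$ with $T_{g_i}^{-1}T_hx\in V$, i.e.\ $g_i^{-1}h\in R(x,U)$, so $h\in g_iR(x,U)$. This again places the finite set on the left; matching the paper's convention $G=AK$ is a bookkeeping choice.

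\textbf{Main obstacle.} The delicate step is the closedness of $K^{-1}\overline V$ in the necessity direction. It is exactly where compactness of $K$ and joint continuity of the action enter, and where regularity is needed to pass from $V$ to $\overline V$.
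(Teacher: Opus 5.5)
The paper gives no proof of this theorem; it is quoted from Gottschalk. Your overall strategy is the standard one: for necessity, trap $Gx$ in a compact family of translates of a small closed neighbourhood of $x$; for sufficiency, extract a finite subcover of $Y$. There is, however, a concrete error in step 3 of the necessity part.

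With $g=ak$, $a\in R(x,V)$, $k\in K$, you get $T_{k^{-1}}T_gx=T_{k^{-1}ak}x$. This equals $T_ax$ only when $a$ and $k$ commute. Even granting $T_{k^{-1}}T_gx\in V$, it yields $T_gx\in T_k(V)$, not $T_{k^{-1}}(V)$; in steps 4--5 you read $K^{-1}\overline V$ as $\bigcup_k T_{k^{-1}}(\overline V)$.

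The decomposition that actually works is $g=ka$. Then $T_gx=T_kT_ax\in T_k(V)$, so $Gx\subseteq KV\subseteq K\overline V$. Your steps 4--6 then go through verbatim with $K$ in place of $K^{-1}$: closedness of $K\overline V$ follows from the tube lemma applied to $(k,z)\mapsto T_{k^{-1}}z$, and then $w=T_{k^{-1}}y\in Gy\cap\overline V$. But this requires $G=K\,R(x,V)$, with the compact set on the \emph{left}, which is the opposite of the paper's definition $G=AK$.

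Your sufficiency argument likewise produces $G=F\,R(x,U)$. The closing claim that matching $G=AK$ is "a bookkeeping choice" is false. For non-abelian $G$ the two sides genuinely differ, and with the paper's convention the sufficiency statement fails.

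Take $G=SL(2,\mathbb{Z})$ (discrete) acting by M\"obius maps on the circle $\mathbb{R}\cup\{\infty\}$. This is a minimal action on a compact metric space. Let $U=(-1,1)$ and suppose $G=R(0,U)F$ with $F$ finite. Then the finite set $C=\{T_{f^{-1}}0 : f\in F\}$ would satisfy $T_hC\cap U\neq\emptyset$ for every $h\in G$. This fails for $h$ the translation $t\mapsto t+n$ with $n$ large, which fixes $\infty$ and pushes the real points of $C$ past $1$.

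For a left action, $T(g,T(h,x))=T(gh,x)$, the correct notion is $G=K\,R(x,U)$. The form $G=AK$ is the natural one for right actions $x\mapsto xg$, as in Gottschalk's setting. With $G=K\,R(x,U)$ your proof, corrected as above, is complete. For abelian $G$, in particular $G=\mathbb{R}^d$ (the only case the paper uses), the two conventions coincide and only the slip $K^{-1}\to K$ needs fixing.

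A minor point: regularity is needed only once in the necessity part, since you can choose $V$ with $\overline V\cap\overline{Gy}=\emptyset$ directly. It is not needed at all for sufficiency.
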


\subsection{Tiling spaces and dynamics}

Consider a full tiling space $X_{\mathcal{T}}$ with its topology under the metric $d_1$. Denote by $T$ the action by translation of $\R^d$ on $X_\mathcal{T}$, that is, $T\colon \R^d\times X_{\mathcal{T}}\to X_{\mathcal{T}}$ is given by $T(v,x):=T_vx$, then $(X_{\mathcal{T}},\R^d,T)$ is a topological dynamical system. From now on, finite local complexity will mean finite local complexity under translation, although the results of this section may be adapted to finite local complexity under the euclidean group just by reproducing the proofs.

For $Y\subseteq X_{\mathcal{T}}$ and $n\in\N$, we denote by $\mathcal{T}^{(n)}(Y)$ the set of protopatches consisting of $n$ tiles such that a translation of its representative appears in some tiling of $Y$, that is $$\mathcal{T}^{(n)}(Y):=\{[x']\in\mathcal{T}^{(n)}:\exists y'\in [x'],\exists y\in Y, \; y'\subseteq y\}.$$

For a singleton we will denote $\mathcal{T}^{(n)}(\{x\})$ just by $\mathcal{T}^{(n)}(x)$
\begin{definition}
    We will say that $Y\subseteq X_{\mathcal{T}}$ has finite local complexity if $\mathcal{T}_{Y}^{(2)}$ is finite. Equivalently, $Y$ has finite local complexity if $\mathcal{T}_{Y}^{(n)}$ is finite for every $n\in\N$, that these two statements are equivalent is analogous to Propositon \ref{prop: flc}. In particular, we will say that a tiling $x\in X_\mathcal{T}$ has finite local complexity if $\{x\}\subseteq X_\mathcal{T}$ has finite local complexity.
\end{definition}

Reproducing the same proof of Theorem \ref{theo: compacidad d1}, we deduce the following.

\begin{theorem}\label{theo:compacidad_flc_subespacios}
    Let $Y\subseteq X_\mathcal{T}$. If $Y$ has finite local complexity, then $Y$ is relatively compact. On the other hand, if $Y$ is invariant under $\R^d$ and is relatively compact, then it has finite local complexity.
\end{theorem}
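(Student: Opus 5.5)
The plan is to adapt both directions of the proof of Theorem \ref{theo: compacidad d1}, replacing the global finiteness of $\mathcal{T}^{(n)}$ by the finiteness of $\mathcal{T}^{(n)}(Y)$.

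\emph{First implication.} Assume $\mathcal{T}^{(2)}(Y)$ is finite, so that $\mathcal{T}^{(n)}(Y)$ is finite for every $n$, by the argument of Proposition \ref{prop: flc}. Since $X_{\mathcal{T}}$ is a metric space, relative compactness of $Y$ amounts to every sequence $(x_i)$ in $Y$ having a subsequence converging in $X_{\mathcal{T}}$. I would rerun the diagonal construction of Theorem \ref{theo: compacidad d1} verbatim.
\begin{enumerate}
\item At stage $n$, take $q_i^n=x_i^{n-1}[B_{nR}]$. The cardinalities $|q_i^n|$ are bounded by the volume argument, so we may pass to a subsequence on which they are constant, equal to $k$.
\item Each $q_i^n$ is a patch of the tiling $x_i^{n-1}\in Y$, so its class lies in $\mathcal{T}^{(k)}(Y)$, which is finite. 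Hence a further subsequence consists of equivalent patches.
\item The selection-lemma argument and Proposition \ref{prop:contencion_convergencia hausdorff} then give a limit patch $p^n$ covering $B_{nR}$, with $p^{n-1}\subseteq p^n$.
\end{enumerate}
The limit $y=\bigcup_n p^n$ is a tiling in $X_{\mathcal{T}}$, though not necessarily in $Y$. The final $\varepsilon$-estimate is unchanged, so $y_n\to y$ and $\overline{Y}$ is compact.

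\emph{Second implication.} Assume $Y$ is $\R^d$-invariant and $\overline{Y}$ is compact. As in the reverse direction of Theorem \ref{theo: compacidad d1}, cover $\overline{Y}$ by finitely many balls $B(x_1,r),\dots,B(x_n,r)$ with $x_j\in\overline{Y}$ and $r<\min\{\frac{1}{3R},R\}$. Take a two-tile patch $x'$ appearing in some $x\in Y$, and a point $v$ in the intersection of its two tiles. By invariance, $T_vx\in Y$, and $\operatorname{supp}(T_vx')\subseteq B_R$.

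Then $d_1(T_vx,x_j)<r$ for some $j$. The same estimate as before shows that $T_vx'$ is equivalent to a two-tile patch whose tiles belong to $x_j[B_{5R}]$. There are finitely many such pairs over $j=1,\dots,n$, so $\mathcal{T}^{(2)}(Y)$ is finite.

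One subtlety is that the centres $x_j$ lie in $\overline{Y}$ rather than in $Y$. This does not matter, since the argument only uses that each $x_j$ is a tiling in $X_{\mathcal{T}}$. Alternatively, one can choose the centres in $Y$ itself by density.

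\emph{Main obstacle.} The delicate step is checking, in the first implication, that every patch produced by the extraction (the $q_i^n$) is a patch of a tiling of $Y$, so that only the finiteness of $\mathcal{T}^{(k)}(Y)$ is ever invoked. This holds because all the $x_i^{n}$ are elements of the original sequence, which lies in $Y$.

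Invariance is genuinely needed in the second implication: we must translate $x$ so that the chosen pair sits near the origin. A relatively compact set that is not invariant could contain tilings with infinitely many distinct adjacent pairs far from the origin.
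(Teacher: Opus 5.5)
Your proposal is correct and takes the same approach as the paper, whose entire proof is to rerun the proof of Theorem \ref{theo: compacidad d1}. You correctly identify the two places where $Y$ enters: the extracted patches $q_i^n$ are patches of tilings from the original sequence in $Y$, so only finiteness of $\mathcal{T}^{(k)}(Y)$ is used, and $\R^d$-invariance is needed to recentre $x$ at $v$ while staying in $Y$. Your remarks that the limit tiling need not lie in $Y$, that centres in $\overline{Y}$ are harmless, and that invariance cannot be dropped (a single tiling without finite local complexity is a compact counterexample) are also accurate.
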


\begin{definition}\label{def:tiling_space}
    A tiling space is a subset of $X_{\mathcal{T}}$ that is closed and invariant under translation ($\R^d$-invariant), i.e, a subsystem of $X_\mathcal{T}$. If $Y\subseteq X_{\mathcal{T}}$ is a tiling space, we call the pair $(Y,T)$ a tiling dynamical system.
\end{definition}

In fact, tiling spaces have a very special form, one way of constructing them is by a set of forbidden protopatches.

\begin{definition}
    Let $X_{\mathcal{T}}$ be a full tiling space and let $\mathcal{F}\subseteq\mathcal{T}^{*}$. We define $$X_{\setminus \mathcal{F}}:=\{x\in X_{\mathcal{T}}: \text{ no patch of } x \text{ belongs to a protopatch of } \mathcal{F}\}.$$

    $\mathcal{F}$ is the set of forbidden protopatches of $X_{\setminus \mathcal{F}}$, thus we call such a set $\mathcal{F}$ a set of forbidden protopatches.
\end{definition}

\begin{proposition}
    Let $\mathcal{F}\subseteq \mathcal{T}^*$, then $X_{\setminus \mathcal{F}}$ is a tiling space. Reciprocally, for any tiling space $Y\subseteq X_{\mathcal T}$ there is a set $\mathcal{F}\subseteq \mathcal{T}^*$ of forbidden protopatches such that $Y=X_{\setminus \mathcal{F}}$.
\end{proposition}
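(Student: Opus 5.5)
The plan is to prove the two directions separately. For the first, let $\mathcal{F}\subseteq\mathcal{T}^*$. Invariance is immediate: a patch $y'$ of $T_vx$ has the form $T_vx'$ with $x'\subseteq x$ a patch, and $[y']=[x']$. Hence if $x$ contains no patch in a forbidden protopatch, neither does $T_vx$.

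For closedness I will show that the complement is open. Suppose $x\notin X_{\setminus\mathcal{F}}$, so some patch $x'\subseteq x$ has $[x']\in\mathcal{F}$. Choose $r>0$ small with $\operatorname{supp}(x')\subseteq B_{1/r-1}$.

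Now take $y$ with $d_1(x,y)<r$. By definition there are $x''\in x[[B_{1/s}]]$, $y''\in y[[B_{1/s}]]$ with $s<r$, and $v\in B_s$ with $T_vx''=y''$. I need $x'\subseteq x''$. Every tile of $x'$ meets the interior of $B_{1/s}$, since it lies in $B_{1/r-1}$. Arguing as in Proposition~\ref{prop:smallest_patch}, such a tile must belong to $x''$: an interior point of that tile lying in $B_{1/s}$ is covered by some tile of $x''$, and the interiors of tiles of $x$ are disjoint. So $T_vx'\subseteq y''\subseteq y$ is a patch of $y$ equivalent to $x'$, which means $y\notin X_{\setminus\mathcal{F}}$. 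Nonemptiness of a tiling space is not required here; if the paper insists on it, $X_{\setminus\mathcal{F}}$ may be empty, and I will remark on that rather than prove it.

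For the converse, given a tiling space $Y$, I define $\mathcal{F}$ to be the set of protopatches that appear in no tiling of $Y$, i.e.\ $\mathcal{F}:=\mathcal{T}^*\setminus\bigcup_n\mathcal{T}^{(n)}(Y)$. The inclusion $Y\subseteq X_{\setminus\mathcal{F}}$ holds by construction. For the reverse inclusion, take $x\in X_{\setminus\mathcal{F}}$ and $n\in\N$. The patch $x[B_n]$ is not forbidden, so some $y\in Y$ contains a translate $T_wx[B_n]$. By invariance $T_{-w}y\in Y$ contains $x[B_n]$ itself, so $d_1(x,T_{-w}y)\le 1/n$ using the trivial translation $v=0$. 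Since $Y$ is closed, $x\in Y$.

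The main obstacle is the closedness step: verifying that the patch $x''$ supplied by the metric really contains the prescribed patch $x'$. The definition of $x[[K]]$ only guarantees covering, not containment. This is where the margin $B_{1/r-1}\subset B_{1/s}$ and the interior-disjointness argument of Proposition~\ref{prop:smallest_patch} must be used carefully. A patch covering $B_{1/s}$ need not contain tiles that merely touch its boundary, so the margin is essential.
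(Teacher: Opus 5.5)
Your proof is correct and follows the paper's argument essentially step for step. Invariance comes from translated patches. Closedness comes from showing that a forbidden patch of $x$ reappears, up to a small translation, in every nearby tiling. The converse uses the complement of the allowed protopatches together with approximants $T_{-w}y\in Y$ that contain $x[B_n]$.

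Your explicit justification that the patch $x''$ supplied by the metric contains $x'$ (via interior-disjointness as in Proposition~\ref{prop:smallest_patch}) fills in a step the paper only asserts. Your caveat about nonemptiness is also apt. The extra margin of $1$ is not actually essential, though: $\operatorname{supp}(x')\subseteq B_{1/r}\subseteq B_{1/s}$ already puts every tile of $x'$ in $x''$.
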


\begin{proof}
    Consider $X_{\setminus \mathcal{F}}$, let $v\in\R^d$ and $x\in X_{\setminus \mathcal{F}}$. It is clear that $T_vx \in X_{\setminus \mathcal{F}}$ since every patch of $T_vx$ is equivalent to a patch in $x$ and no patch of $x$ is in a protopatch of $\mathcal{F}$, hence $X_{\setminus \mathcal{F}}$ is $T$-invariant. On the other hand, consider a sequence $(x_n)_{n\in\N}$ in $X_{\setminus \mathcal{F}}$ convergent to $x\in X_{\mathcal{T}}$. Let $x'\subseteq x$ be a patch. Suppose that $x'$ is in a protopatch of $\mathcal{F}$. Let $r>0$ be large enough so that $x'\subseteq B_r$. Consider, because of $x_n\to x$ we deduce that there is $n_0\in\N$ such that there is $y'\in x_{n_0}[[B_{\frac{1}{r}}]], z'\in x[[B_{\frac{1}{r}}]]$ and $v\in B_r$ such that $T_vz' =y'$, thence $T_vx'\subseteq y'$, thus $x_{n_0}$ contains a translation of $x'$, which is in a protopatch of $\mathcal{F}$, contradicting $x_{n_0}\in X_{\setminus \mathcal{F}}$. Thence $x\in X_{\setminus \mathcal{F}}$ and $X_{\setminus \mathcal{F}}$ is a tiling space.

    For the converse, assume that $Y$ is a tiling space and consider its set of allowed protopatches $$\mathcal{A}:=\{x'\in\mathcal{T}^*: \exists x\in Y, \exists s\in\R^d, T_sx'\in x \}.$$

    We claim that $Y= X_{\setminus\mathcal{A}^c}$. It is clear that $Y\subseteq X_{\setminus\mathcal{A}^c}$. For the other inclusion, let $y\in X_{\setminus\mathcal{A}^c}$. By definition, for every $n\in\N$ we have that $y[B_n]$ is not in a protopatch of $\mathcal{A}^c$, thus there is $y_n\in Y$ and $s_n \in\R^d$ such that $T_{s_n}y[B_n] \in y_n$. Using that $Y$ is a tiling space we deduce that $T_{-s_n}y_n \in Y$. Also, we have that $y[B_n] \in T_{-s_n}y_n$. Therefore $T_{-s_n}y_n\to y$. Consequently, $y\in Y$ ($Y$ is closed).
\end{proof}

\begin{corollary}
    A tiling space $Y\subseteq X_{\mathcal{T}}$ has finite local complexity if and only if is a compact.
\end{corollary}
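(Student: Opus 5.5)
The plan is to derive the corollary from Theorem \ref{theo:compacidad_flc_subespacios}, using that a tiling space $Y$ is, by Definition \ref{def:tiling_space}, closed in $X_{\mathcal{T}}$ and $\R^d$-invariant. Both directions reduce to comparing ``compact'' with ``relatively compact'' for a closed subset of the metric space $(X_{\mathcal{T}},d_1)$.

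\emph{Finite local complexity implies compactness.} Assume $Y$ has finite local complexity. The first part of Theorem \ref{theo:compacidad_flc_subespacios} gives that $Y$ is relatively compact, i.e.\ $\overline{Y}$ is compact in $X_{\mathcal{T}}$. Since $Y$ is closed, $\overline{Y}=Y$, so $Y$ is compact. If one prefers to avoid the closure in an ambient space that may itself not be compact, I would instead argue sequentially, reproducing the diagonal argument of Theorem \ref{theo: compacidad d1}. Given a sequence in $Y$, the f.l.c.\ of $Y$ makes the relevant sets $\mathcal{T}^{(k)}(Y)$ finite, so the inductive construction yields a subsequence converging to some tiling $y\in X_{\mathcal{T}}$. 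Closedness of $Y$ then gives $y\in Y$, and sequential compactness is equivalent to compactness in a metric space.

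\emph{Compactness implies finite local complexity.} Assume $Y$ is compact. Then it is in particular relatively compact, and it is $\R^d$-invariant because it is a tiling space. The second part of Theorem \ref{theo:compacidad_flc_subespacios} then gives that $Y$ has finite local complexity.

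The only delicate point is that Theorem \ref{theo:compacidad_flc_subespacios} was obtained ``by reproducing'' the proof of Theorem \ref{theo: compacidad d1}, so I would check that the reverse direction really uses only invariance plus a finite $r$-cover of $Y$. The argument goes as follows. Take any $2$-tile patch $x'$ occurring in some $x\in Y$, and translate so that a common point of its two tiles sits at the origin; by invariance $T_vx\in Y$. Cover $Y$ by finitely many balls $B(x_i,r)$ with $x_i\in Y$, which is possible by compactness. The argument from Theorem \ref{theo: compacidad d1} then shows that $x'$ is equivalent to a pair of tiles in some $x_i[B_{5R}]$. This gives finitely many possibilities, so $\mathcal{T}^{(2)}(Y)$ is finite. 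This is exactly the earlier argument with $X_{\mathcal{T}}$ replaced by $Y$, so no new obstacle arises.
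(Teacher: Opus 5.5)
Your proposal is correct and matches the paper, which states this corollary without proof as an immediate consequence of Theorem~\ref{theo:compacidad_flc_subespacios}. Because a tiling space is closed, ``relatively compact'' is the same as ``compact'', and its $\R^d$-invariance is exactly the hypothesis the converse needs. Your extra check that the covering argument of Theorem~\ref{theo: compacidad d1} goes through for $Y$ is sound.
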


It is clear that if the full tiling $X_\mathcal{T}$ has finite local complexity then every tiling space $Y\subseteq X_{\mathcal{T}}$ is compact.

An important example of tiling space is the closure of the orbit of a tiling, sometimes also called the hull of the corresponding tiling. Observe that the set of forbidden protopatches of the hull of a tiling is the set of all protopatches that are not the protopatch of any patch in $x$. For a quick application of topological dynamics to the hull of a tiling we refer the interested reader to \cite[Theorem 2 and Theorem 3]{DeLaLlaveWindsor2009}. When does the hull of a tiling have finite local complexity depends only on the respective tiling having finite local complexity.

\begin{proposition}\label{prop:flc_hull}
    Let $x\in X_\mathcal{T}$ be a tiling. $\cl{\orbit{x}}$ has finite local complexity if and only if $x$ has finite local complexity.
\end{proposition}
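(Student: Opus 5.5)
The key claim is that the hull and the tiling itself see exactly the same two-tile protopatches:
\begin{equation*}
\mathcal{T}^{(2)}(\cl{\orbit{x}})=\mathcal{T}^{(2)}(x).
\end{equation*}
Both implications follow directly from this, since finite local complexity of $Y$ and of $x$ are defined as finiteness of these respective sets.

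The inclusion $\mathcal{T}^{(2)}(x)\subseteq\mathcal{T}^{(2)}(\cl{\orbit{x}})$ is immediate because $x\in\cl{\orbit{x}}$. This already gives the implication: if the hull has finite local complexity, then so does $x$. More generally, every $y=T_vx$ in the orbit has exactly the patches $T_vx'$ with $x'\subseteq x$, so the orbit itself contributes no new protopatches.

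For the reverse inclusion, I take $y\in\cl{\orbit{x}}$ and a two-tile patch $y'=\{D_1,D_2\}\subseteq y$. I choose $r>0$ with $\operatorname{supp}(y')\subseteq B_{1/r}$ and $r<1/\sqrt{2}$. Since $y$ is in the closure of the orbit, there is $v\in\R^d$ with $d_1(T_vx,y)<r$. By the definition of $d_1$ there are then patches $z'\in T_vx[[B_{1/r}]]$ and $w'\in y[[B_{1/r}]]$, and $s\in B_r$, with $T_sz'=w'$.

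Now $D_1$ and $D_2$ lie in $y$ and meet $\inte{B_{1/r}}$, so by Proposition~\ref{prop:smallest_patch} they belong to $y[B_{1/r}]\subseteq w'$. I should check that each $D_i$ has interior meeting $\inte{B_{1/r}}$. This holds by enlarging the radius slightly, so that each $D_i\subseteq\inte{B_{1/r}}$ and $D_i$ is the closure of its interior. Hence $T_{-s}D_1$ and $T_{-s}D_2$ lie in $z'\subseteq T_vx$, so $\{D_1,D_2\}$ is a translate of a patch of $x$. Therefore $[y']\in\mathcal{T}^{(2)}(x)$.

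This proves equality of the two sets, and hence the reverse implication: if $x$ has finite local complexity, then so does the hull.

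The main obstacle is the bookkeeping that guarantees $y'\subseteq w'$. A patch covering $B_{1/r}$ need not contain every tile touching the ball, so I would use tiles in the interior and the minimal patch $y[B_{1/r}]$ from Proposition~\ref{prop:smallest_patch}, rather than $y(B_{1/r})$. The same argument also gives the equality for every $n$, if needed.
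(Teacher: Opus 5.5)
Your proof is correct and follows the same route as the paper. Both establish $\mathcal{T}^{(2)}(\cl{\orbit{x}})=\mathcal{T}^{(2)}(x)$: one inclusion is trivial because $x\in\cl{\orbit{x}}$, and the other comes from approximating $y$ by an orbit point that agrees with it, up to a small translation, on a large ball. You also make explicit a step the paper passes over: after arranging $\operatorname{supp}(y')\subseteq\inte{B_{1/r}}$, Proposition~\ref{prop:smallest_patch} shows that the covering patch $w'$ actually contains $y'$.
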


\begin{proof}
    We will see that $\mathcal{T}^{(2)}(\cl{\orbit{x}})=\mathcal{T}^{(2)}(x)$. It is clear that $\mathcal{T}^{(2)}(x)\subseteq \mathcal{T}^{(2)}(\cl{\orbit{x}})$ since $x\in \cl{\orbit{x}}$. Let $y'$ be a patch of two tiles such that $[y']\in \mathcal{T}^{(2)}(\cl{\orbit{x}})$, then there is a tiling $y$ such that $y$ contains a translation of $y'$. Since $y\in \cl{\orbit{x}}$, there is a sequence $(y_n)_{n\in\N}$ in $\orbit{x}$ such that $y_n \to y$. Let $r>0$ be small enough so that $y$ contains a translation of $y'$ inside $B_\frac{1}{r}$. Using that $y_n \to y$, there is $n_0$ large enough and there is $y'_{n_0} \in y_{n_0}[[B_{\frac{1}{r}}]]$ such that $y'_{n_0}$ contains a translation of $y'$. That is, $y_{n_0}$ contains a translation of $y'$. Therefore, $x$ contains a translation of $y'$, because $y_{n_0}$ is a translation of $x$.
\end{proof}

In section \ref{sec:tiling_topology} we saw that under the tiling metric \eqref{def_metrica} compactness of a tiling space, which is a topological concept, is equivalent to a ``tiling property", which in this case is finite local complexity. Now, we will see under what ``tiling property" is the hull of a tiling minimal, which is a dynamical concept.

If we want a tiling $x$ to have a minimal hull, then, any other tiling of the hull must have dense orbit in the hull of $x$. That means, that any patch of $x$ is being locally repeated with a  to the ``horizon". A tiling with this property is called repetitive.

\begin{definition}
    A tiling $x\in X_\mathcal{T}$ is repetitive if for every patch $x'\subseteq x$ there is $r>0$ such that for every $t\in\R^d$ there is $s\in\R^d$ with $T_sx'\in x$  with such that $\operatorname{supp}(T_sx') \subseteq B(t,r).$ That is, for every patch $x'\subseteq x$ there is a radius $r>0$ such that for every ball of radius $r$ there is a translation of $x'$ in $x$ whose support is contained  in such ball.
\end{definition}

A repetitive tiling is a tiling such that any patch of $x$ is being locally repeated to the ``horizon" with a bounded gap between occurrences.

Observe that in the context of $\R^d$, a set $A\subseteq\R^d$ is syndetic if and only if there is $r>0$ such that every ball of radius $r$ intersects $A$.

\begin{theorem}\label{theo:repetitivo=minimal}
    Let $x\in X_\mathcal{T}$ be a tiling. If $x$ is repetitive then $\cl{\orbit{x}}$ is minimal. If $x$ has finite local complexity, then $\cl{\orbit{x}}$ being minimal implies that $x$ is repetitive.
\end{theorem}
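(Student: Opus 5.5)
For the first implication I will show directly that every $y\in\cl{\orbit{x}}$ has an orbit that is dense in $\cl{\orbit{x}}$. It suffices to show that $x\in\cl{\orbit{y}}$: then $\cl{\orbit{x}}\subseteq\cl{\orbit{y}}\subseteq\cl{\orbit{x}}$ by invariance and closedness. The argument does not pass through Theorem \ref{theo:orbita_minimal}, so no compactness is needed.

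Fix $\varepsilon\in(0,\frac{1}{\sqrt2})$ and let $x'=x[B_{1/\varepsilon}]$, the smallest patch covering $B_{1/\varepsilon}$ (Proposition \ref{prop:smallest_patch}). By repetitivity there is $r>0$ such that every ball $B(t,r)$ contains the support of some translate $T_sx'\subseteq x$. Since $y\in\cl{\orbit{x}}$, choose $u$ with $d_1(T_ux,y)<\delta$, where $\delta$ is small enough that $\frac1\delta>r+1$. Then $T_ux$ and $y$ agree, up to a translation $v$ with $\|v\|\le\delta$, on patches covering $B_{1/\delta}\supseteq B(0,r)$. The ball $B(0,r)$ contains a copy of $x'$ in $T_ux$, and this copy is part of a patch covering $B(0,r)$. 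Transporting it by $v$ gives a translate $T_wx'\subseteq y$ with $\|w\|\le r+1$. Translating $y$ by $w$ places an exact copy of $x'$ in $T_{w}y$ in the position it has in $x$. Hence $T_wy$ and $x$ share the patch $x'$ with zero offset, and $d_1(T_wy,x)\le\varepsilon$. This proves $x\in\cl{\orbit{y}}$. The bookkeeping to check is the direction of the translations and that the copy found inside $B(0,r)$ really lies in the patch on which $T_ux$ and $y$ agree. Both follow from the definitions once $\frac1\delta$ is chosen larger than $r+\operatorname{diam}(\operatorname{supp}x')$.

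For the converse, assume $x$ has finite local complexity and $\cl{\orbit{x}}$ is minimal. By Proposition \ref{prop:flc_hull} and the corollary on compactness, $\cl{\orbit{x}}$ is compact. The space $X_{\mathcal T}$ is metric, hence regular. So Theorem \ref{theo:orbita_minimal} applies and $x$ is almost periodic: for every neighbourhood $U$ of $x$, the set $R(x,U)$ is syndetic in $\R^d$, meaning there is $\rho>0$ such that every ball of radius $\rho$ meets it.

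Now take a patch $x'\subseteq x$ and pick $n$ with $\operatorname{supp}(x')\subseteq B_n$. Set $U=\{z:d_1(z,x)<\varepsilon\}$ with $\frac1\varepsilon>n+1$ and $\varepsilon<\frac{1}{\sqrt2}$. If $g\in R(x,U)$, then $T_gx$ contains a patch covering $B_{1/\varepsilon}$ that equals a translate, by some $v$ with $\|v\|\le\varepsilon$, of a patch of $x$ covering $B_{1/\varepsilon}$. The latter contains $x'$, because every tile of $x'$ meets $B_n$ and a patch covering $B_{1/\varepsilon}$ contains $x[B_{1/\varepsilon}]$, which contains the tiles meeting $B_n$ in their interior. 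Therefore $T_gx$ contains $T_{v}x'$, so $x$ contains a translate of $x'$ whose support lies within distance $\varepsilon$ of $\operatorname{supp}(x')+g\subseteq B(g,n)$. For an arbitrary $t$, syndeticity gives such a $g\in B(t,\rho)$. Hence the translate of $x'$ lies in $B(t,\rho+n+1)$, and $r=\rho+n+1$ witnesses repetitivity.

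The main obstacle is the sign and containment bookkeeping in both halves: which tiles of $x'$ are guaranteed to be in every patch covering a large ball. This is handled via Proposition \ref{prop:smallest_patch} together with the fact that each tile of $x'$ has interior meeting $\inte{B_{1/\varepsilon}}$.
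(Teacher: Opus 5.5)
Your proof is correct, but it splits the work the opposite way from the paper.

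\textbf{Repetitive $\Rightarrow$ minimal.} The paper shows that $R(x,B(x,\delta))$ is syndetic: every ball $B(t,r)$ contains some $-s$ with $x[B_{1/\delta}]\subseteq T_{-s}x$. It then invokes the necessity half of Theorem \ref{theo:orbita_minimal}. You avoid that theorem and prove minimality by hand. You show $x\in\cl{\orbit{y}}$ for each $y$ in the hull, by transporting a copy of $x[B_{1/\varepsilon}]$ found in $B(0,r)$ from $T_ux$ to $y$.

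\textbf{The converse.} The paper argues by contradiction from sequential compactness. If some patch $x'$ avoids the balls $B(t_n,n)$, a limit point $y$ of the $T_{t_n}x$ contains no copy of $x'$, so (implicitly) $x\notin\cl{\orbit{y}}$. You instead invoke the sufficiency half of Theorem \ref{theo:orbita_minimal}: a compact minimal orbit closure makes $x$ almost periodic. You then simply unpack almost periodicity into repetitivity.

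\textbf{Trade-off.} Your forward half is self-contained. Your converse is a definition chase showing that almost periodicity implies repetitivity with no f.l.c.\ hypothesis at all; f.l.c.\ enters only to obtain compactness. However, the one genuinely nontrivial step is outsourced to the citation: the finite-subcover argument behind ``compact and minimal $\Rightarrow$ almost periodic''. The paper keeps that harder direction self-contained and delegates only the easy half, which needs nothing beyond regularity of the metric space.

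\textbf{Minor bookkeeping.} With the convention $T_vx=\{D-v:D\in x\}$, if $T_wx'\subseteq y$ then it is $T_{-w}y$, not $T_wy$, that contains $x'$. Also, the bound $\|w\|\le r+1$ is never used, since any translate of $y$ close to $x$ suffices.
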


\begin{proof}
    Assume $x$ to be repetitive. To show that $\cl{\orbit{x}}$ is minimal we will  use Theorem \ref{theo:orbita_minimal}. Consider a neighborhood of $x$, that is, $B(x,\delta)$ with $\delta>0$. We shall see that $R(x,B(x,\delta))$ is syndetic. Since $x$ is repetitive, there is $r>0$ such that for any ball $B(t,r)$ there is a translation of $x[B_{\frac{1}{\delta}}]$ in $x$ with its support contained in $B(t,r)$. We will see that any ball of radius $r$ intersects $R(x,B(x,\delta))$. Consider $t\in\R^d$ and $B(t,r)$. We know that there is $s\in\R^d$ with $T_sx[B_{\frac{1}{\delta}}]\in x$ such that
    \begin{equation}\label{caso_trivial_minimalidad}
        \operatorname{supp}(T_sx[B_{\frac{1}{\delta}}])\subseteq B(t,r).
    \end{equation}
    Note that since $0\in\operatorname{supp}(x[B_{\frac{1}{\delta}}])$, then $-s \in \operatorname{supp}(T_sx[B_{\frac{1}{\delta}}])$, thus $$-s\in B(t,r).$$ Using \eqref{caso_trivial_minimalidad}, we have that $x[B_{\frac{1}{\delta}}]\in T_{-s}x$. Accordingly, we have that $$T_{-s}x\in B(x,\delta),$$ because $x[B_{\frac{1}{\delta}}]\in x[[B_{\frac{1}{\delta}}]]$, $x[B_{\frac{1}{\delta}}]\in T_{-s}x[[B_{\frac{1}{\delta}}]]$, and $T_{0}x[B_{\frac{1}{\delta}}]= x[B_{\frac{1}{\delta}}]$. Therefore $-s\in B(t,r)\cap R(x,B(x,\delta))$, consequently, $R(x,B(x,\delta))$ is syndetic and by Theorem \ref{theo:orbita_minimal}, $\cl{\orbit{x}}$ is minimal.
    
    For the reverse implication, we assume that $x$ has finite local complexity and we proceed by contradiction, so let us assume that $\cl{\orbit{x}}$ is minimal but $x$ is not repetitive. Since $x$ has finite local complexity, by Theorem \ref{theo:compacidad_flc_subespacios} and Proposition \ref{prop:flc_hull}, we deduce that $\cl{\orbit{x}}$ is compact. Since $x$ is not minimal, there is a patch $x'\subseteq x$ such that
    \begin{equation}\label{rumbo a contradiccion}
        \forall n>0,\exists t_n\in\R^d, \forall s\in\R^d \text{ with $T_sx' \in x$}, \overline{\operatorname{supp}(T_sx')\subseteq B(t_n,n)}.
    \end{equation}
    In particular, $\operatorname{supp}(x')$ is not contained in $B(t_n ,n)$.

    Consider the squence $(T_{t_n}x)_{n\in\N}$ in $\cl{\orbit{x}}$. By compactness we know that there is a stricly increasing map $\varphi\colon \N\to\N$ such that $T_{t_{\varphi(n)}}x$ converges to $y\in \cl{\orbit{x}}$. We claim that $y$ has no translation of $x'$. Indeed, assume that there is a translation $y'$ of $x'$ in $y$. Consider,  $N\in\N$ large enough so that 
    \begin{equation}\label{patch_en_limite}
        \operatorname{supp}(y')\subseteq B_N. 
    \end{equation}
    Since $T_{t_{\varphi(n)}}x\to y$, there is $n_0 \in\N$ such that for every $n\geq n_0$,
    \begin{equation}\label{convergencia_en_orbita}
         \exists y'_n \in T_{t_{\varphi(n)}}x[[B_{N+2}]],\exists y''_n\in y[[B_{N+2}]], \exists v_n\in B_{\frac{1}{N+2}}, T_{v_n}y''_n = y'_n.
    \end{equation}
    In particular, since $\operatorname{supp}(y')\subseteq B_N\subseteq B_{N+2}$, it follows that $y'\subseteq y''_n$.
    Therefore, letting $n:= \max\{n_0 ,N+2\}$, we deduce that $T_{v_n}y'\subseteq y'_n$, thus $T_{v_n}y'$ is in $T_{t_{\varphi(n)}}x$. Note that because of  \eqref{patch_en_limite} together with $y'_n\in T_{t_{\varphi(n)}}x[[B_{N+2}]]$ and $||v_n||\leq\frac{1}{N+2}$ in \eqref{convergencia_en_orbita}, then $\operatorname{supp}(T_{v_n}y')\subseteq B_{N+2} \subseteq B_{n}\subseteq B_{\varphi(n)}$. Hence, $T_{-t_{\varphi(n)}}T_{v_n}y'$ is in $x$ and $\operatorname{supp}(T_{-t_{\varphi(n)}}T_{v_n}y')\subseteq B(t_{\varphi(n)},\varphi(n))$. Thence, remembering that $y'$ is a translation of $x'$, we conclude that there is a translation of $x'$ in $x$ whose support is contained in $B(t_{\varphi(n)},\varphi(n))$, which contradicts \eqref{rumbo a contradiccion}. Consequently, $x$ must be repetitive.

\end{proof}

Note that when $x\in X_{\mathcal{T}}$ has finite local complexity and is repetitive, then every tiling of $\cl{\orbit{x}}$ has finite local complexity and is repetitive.

\begin{corollary}
    If $x\in X_{\mathcal{T}}$ has finite local complexity, then
    $$\cl{\orbit{x}} \text{ is minimal}\Longleftrightarrow x \text{ is repetitive}\Longleftrightarrow x \text{ is almost periodic.}$$
\end{corollary}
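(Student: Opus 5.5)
The corollary chains two equivalences, and I will obtain each from a result already in the paper, both applied under the standing assumption that $x$ has finite local complexity.

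\emph{Minimality and repetitivity.} The equivalence between $\cl{\orbit{x}}$ being minimal and $x$ being repetitive is exactly Theorem \ref{theo:repetitivo=minimal}. Its first part gives that repetitivity implies minimality, with no hypothesis on $x$. Its second part gives that minimality implies repetitivity whenever $x$ has finite local complexity, which holds here. So I will simply cite the theorem.

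\emph{Minimality and almost periodicity.} For this equivalence I will invoke Theorem \ref{theo:orbita_minimal} with $X=X_{\mathcal T}$ and $G=\R^d$ acting by translation. That theorem needs three inputs.
\begin{enumerate}
\item The action is continuous. This is Proposition \ref{prop:continuidad_accion}.
\item The space is regular. $X_{\mathcal T}$ is metric under $d_1$, and every metric space is regular.
\item The orbit closure $\cl{\orbit{x}}$ is compact. Since $x$ has finite local complexity, Proposition \ref{prop:flc_hull} shows that $\cl{\orbit{x}}$ has finite local complexity. Theorem \ref{theo:compacidad_flc_subespacios} then shows that $\cl{\orbit{x}}$ is relatively compact, and being closed it is compact.
\end{enumerate}
With these in place, Theorem \ref{theo:orbita_minimal} yields that $x$ is almost periodic if and only if $\cl{\orbit{x}}$ is minimal. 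Composing this with the first equivalence gives the full chain.

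\emph{Main obstacle.} The only delicate point is reconciling the abstract definition of syndeticity with the metric one. In the definition, a set $A$ is syndetic when $G=AK$ for some compact $K$. In $\R^d$ this is the same as requiring $A+B_r=\R^d$ for some $r$, that is, every ball of radius $r$ meets $A$. This is the reformulation already remarked in the paper just before Theorem \ref{theo:repetitivo=minimal}.

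One could also skip the converse direction of Theorem \ref{theo:orbita_minimal}. The implication from repetitivity to almost periodicity is shown directly in the first half of the proof of Theorem \ref{theo:repetitivo=minimal}, where $R(x,B(x,\delta))$ is proved to be syndetic. The implication from almost periodicity to minimality is the necessity part of Theorem \ref{theo:orbita_minimal}, which requires only regularity. This alternative route gives the same chain without needing compactness for the almost periodic step.
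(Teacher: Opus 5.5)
Your proposal is correct and follows the paper's proof, which obtains the corollary simply by combining Theorem \ref{theo:orbita_minimal} and Theorem \ref{theo:repetitivo=minimal}. Your checks of the hypotheses of Theorem \ref{theo:orbita_minimal} make explicit what the paper leaves implicit: continuity via Proposition \ref{prop:continuidad_accion}, regularity of the metric space, and compactness of the hull via Proposition \ref{prop:flc_hull} and Theorem \ref{theo:compacidad_flc_subespacios}.
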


\begin{proof}
    Consequence of Theorem \ref{theo:orbita_minimal} and Theorem \ref{theo:repetitivo=minimal}
\end{proof}

\begin{corollary}
    If $\mathcal{T}$ admits a tiling $x\in X_{\mathcal{T}}$ with finite local complexity, then it admits a repetitive tiling.
\end{corollary}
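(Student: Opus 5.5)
The plan is to pass to the hull of $x$, use compactness and Zorn to find a minimal subsystem inside it, and then invoke Theorem \ref{theo:repetitivo=minimal} for any tiling in that subsystem.

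First I will show that $\cl{\orbit{x}}$ is a compact tiling space. Since $x$ has finite local complexity, Proposition \ref{prop:flc_hull} gives that $\cl{\orbit{x}}$ has finite local complexity. It is closed, and it is $\R^d$-invariant because the action is continuous (Proposition \ref{prop:continuidad_accion}) and the orbit is invariant. Theorem \ref{theo:compacidad_flc_subespacios} then gives that it is relatively compact, and being closed, it is compact. Hence $(\cl{\orbit{x}},\R^d,T)$ is a compact topological dynamical system.

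Next, by Proposition \ref{prop:existencia_minimal}, this system has a minimal subsystem $Y\subseteq\cl{\orbit{x}}$. Pick any $y\in Y$. Since $Y$ is closed and invariant, $\cl{\orbit{y}}\subseteq Y$, and this set is itself a subsystem. Minimality of $Y$ therefore forces $\cl{\orbit{y}}=Y$, so $\cl{\orbit{y}}$ is minimal.

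Finally, I need $y$ to have finite local complexity in order to apply the second half of Theorem \ref{theo:repetitivo=minimal}. This holds because $\mathcal{T}^{(2)}(y)\subseteq\mathcal{T}^{(2)}(\cl{\orbit{x}})=\mathcal{T}^{(2)}(x)$, which is finite; the equality was shown in the proof of Proposition \ref{prop:flc_hull}. Theorem \ref{theo:repetitivo=minimal} then yields that $y$ is repetitive. Also $y\in X_{\mathcal{T}}$, so $\mathcal{T}$ admits a repetitive tiling, and it additionally has finite local complexity.

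The main point to get right is the compactness of the hull, which is what allows Zorn's lemma to be applied and what the repetitivity direction of Theorem \ref{theo:repetitivo=minimal} relies on. The rest is bookkeeping: checking that finite local complexity passes from $x$ to $y$ through the containment of protopatch sets, and that $\cl{\orbit{y}}$ equals the minimal set $Y$.
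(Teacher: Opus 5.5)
Your proposal is correct and follows the same route as the paper: compactness of the hull from finite local complexity, then a minimal subsystem $Y$ from Proposition~\ref{prop:existencia_minimal}, then the second half of Theorem~\ref{theo:repetitivo=minimal} applied to any $y\in Y$. Your explicit check that $y$ has finite local complexity, via $\mathcal{T}^{(2)}(y)\subseteq\mathcal{T}^{(2)}(\cl{\orbit{x}})=\mathcal{T}^{(2)}(x)$, supplies a hypothesis of that theorem which the paper's proof uses without stating it.
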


\begin{proof}
    By Proposition \ref{theo:compacidad_flc_subespacios}, the hull of $x$ is a compact dynamical system, thence it admits a minimal subsystem $Y$. Let $y\in Y$, by minimality $\cl{\orbit{y}}=Y$, thus $\cl{\orbit{y}}$ is minimal. Thus, by Theorem \ref{theo:repetitivo=minimal}, $y$ is repetitive. 
\end{proof}

\bibliographystyle{acm}
\bibliography{references}

\end{document}